\newif\ifpictures
\picturestrue

\documentclass[a4paper,12pt]{amsart}

\usepackage[latin1]{inputenc}
\usepackage{a4wide}
\usepackage{setspace}
\usepackage{relsize} 
\usepackage{amsopn} 
\usepackage{xspace} 
\usepackage{amsthm}
\usepackage{supertabular}
\usepackage{amssymb}
\usepackage{amsfonts}
\usepackage{stmaryrd}
\usepackage{xcolor}
\usepackage[dvips]{epsfig}
\usepackage{float}
\usepackage[arrow, matrix, curve]{xy}
\usepackage[vlined,boxed]{algorithm2e}


\newcommand{\id}{\textrm{id}}

\newcommand{\spn}{\textrm{span}}

\DeclareMathOperator{\conv}{conv }

\DeclareMathOperator{\ttint}{int}
\DeclareMathOperator{\pyr}{pyr }
\DeclareMathOperator{\bpyr}{bipyr }

\newcommand{\py}[1]{\pyr_{#1}}


\DeclareMathOperator{\PS}{\mathcal{S}}
\DeclareMathOperator{\PM}{\mathcal{M}}
\DeclareMathOperator{\PW}{\mathcal{W}}


\newcommand{\eps}{\varepsilon}

\newcommand{\lam}{\lambda}

\newcommand{\sig}{\sigma}

\newcommand{\Sig}{\Sigma}

\newcommand{\tri}{\triangle}

\newcommand{\lf}{\left}
\newcommand{\ri}{\right}
\newcommand{\ra}{\rightarrow}
\newcommand{\Ra}{\Rightarrow}

\newcommand{\Lera}{\Leftrightarrow}

\newcommand{\w}{\wedge}

\newcommand{\wh}{\widehat}

\newcommand{\bs}{\backslash}


\newcommand\fN{{\ensuremath{\mathbb{N}}}\xspace}

\newcommand\fR{{\ensuremath{\mathbb{R}}}\xspace}

\newcommand\cA{{\ensuremath{\mathcal{A}}}\xspace}

\newcommand\cF{{\ensuremath{\mathcal{F}}}\xspace}

\newcommand\cH{{\ensuremath{\mathcal{H}}}\xspace}

\newcommand\cM{{\ensuremath{\mathcal{M}}}\xspace}

\newcommand\cO{{\ensuremath{\mathcal{O}}}\xspace}

\newcommand\cS{{\ensuremath{\mathcal{S}}}\xspace}

\newcommand\cV{{\ensuremath{\mathcal{V}}}\xspace}

\newif\ifpictures
\picturestrue

\usepackage{amssymb,amsmath}
\usepackage{hyperref}
\usepackage[dvips]{graphicx}

\ifpictures
\usepackage{psfrag}
\fi

\headheight=8pt
\topmargin=20pt 
\textheight=611pt     \textwidth=456pt
\oddsidemargin=6pt   \evensidemargin=6pt

\numberwithin{equation}{section}

\newtheoremstyle{MatheStandard}
{}{10pt} 
{} 
{-8pt} 
{\bfseries}{:} 
{\newline} 
{} 

\newtheorem{satz}{Theorem}[section]
\newtheorem{lem}[satz]{Lemma}
\newtheorem{cor}[satz]{Corollary}
\newtheorem{defn}[satz]{Definition}
\newtheorem{exa}[satz]{Example}

\newtheorem{thm}[satz]{Theorem}
\newtheorem{prop}[satz]{Proposition}
\newtheorem{rem}[satz]{Remark}

\newtheorem*{satz*}{Theorem}
\newtheorem*{lem*}{Lemma}
\newtheorem*{cor*}{Corollary}
\newtheorem*{defn*}{Definition}
\newtheorem*{exa*}{Example}
\newtheorem*{thm*}{Theorem}
\newtheorem*{prop*}{Proposition}
\newtheorem*{rem*}{Remark}
\newtheorem*{conj*}{conj}

\newtheoremstyle{MatheExtra}
{}{10pt} 
{} 
{-8pt} 
{\itshape}{:} 
{\newline} 
{} 

\usepackage{booktabs}

\title{Polytopes with special simplices}
\author{Timo de Wolff}
\date{\today}

\begin{document}
\maketitle

\begin{abstract}
For a polytope $P$ a simplex $\Sig$ with vertex set $\cV(\Sig)$ is called a \textit{special simplex} if every facet of $P$ contains all but exactly one vertex of $\Sig$.

For such polytopes $P$ with face complex $\cF(P)$ containing a special simplex the subcomplex $\cF(P) \bs \cV(\Sig)$ of all faces not containing vertices of $\Sig$ is the boundary of a polytope $Q$ --- the \textit{basis polytope} of $P$. If additionally the dimension of the affine basis space of $\cF(P) \bs \cV(\Sig)$ equals $\dim(Q)$, we call $P$ \textit{meek}; otherwise we call $P$ \textit{wild}.

We give a full combinatorial classification and techniques for geometric construction of the class of meek polytopes with special simplices. We show that every wild polytope $P'$ with special simplex can be constructed out of a particular meek one $P$ by intersecting $P$ with particular hyperplanes. It is non--trivial to find all these hyperplanes for an arbitrary basis polytope; we give an exact description for 2--basis polytopes.  Furthermore we show that the $f$--vector of each wild polytope with special simplex is componentwise bounded above by the $f$--vector of a particular meek one which can be computed explicitly. Finally, we discuss the $n$--cube as a non--trivial example of a wild polytope with special simplex and prove that its basis polytope is the zonotope given by the Minkowski sum of the $(n-1)$--cube and vector $(1,\ldots,1)$.

Polytopes with special simplex have applications on Ehrhart theory, toric rings and were just used by Francisco Santos to construct a counter--example disproving the Hirsch conjecture.
\end{abstract}

\section{Introduction}
\label{SecIntroduction}
Studying the combinatorics of polytopes or of classes of polytopes is a main topic in polytope theory with many applications (as a general reference see, e.g., \cite{Gruenb1,Zieg1}). In this paper we study \emph{polytopes
with a special simplex} which have been introduced by C. Athanasiadis in his work on Ehrhart series, $h$-vectors of polytopes and Gorenstein toric rings \cite{Atha1}.\\

For a polytope $P$, let $\cV(P)$ denote the vertex set of $P$, $\cF(P)$ denote the face complex and $\cF(\partial P)$ denote the boundary complex of $P$ (see e.g. \cite[p. 129]{Zieg1}). Similarly, let $\cV(\cF(P))$ denote the
vertex set of any polytopal complex $\cF(P)$. For every set $\sig \subseteq \cV(\cF(P))$ consisting of vertices of a polyhedral complex $\cF(P)$, we denote the maximal subcomplex of $\cF(P)$ which does not contain vertices in $\sig$ as $\cF(P) \bs \sig$.

\begin{defn}[\cite{Atha1}]
Let $P$ be an $n$--polytope in $\fR^q$ and let $\Sig$ be a simplex spanned by $m+1$ vertices of $P$. Then we call $\Sig$ a \emph{special simplex} in $P$, if every facet of $P$ contains exactly $m$ vertices of $\Sig$ (i.e. every facet contains all but one vertex of $\Sig$). We define
\begin{eqnarray*}
	\PS_{(n,m)}	& := & \{P \ | \ P \text{ is a polytope with some special simplex } \Sig, \dim(P) = n, \dim(\Sig) = m\}
\end{eqnarray*}

\label{DefSpezSimpl}
\end{defn}

In \cite{Atha1} Athanasiadis proves using the results of Stanley (\cite{Stan1}) as well as Reiner and Welker (\cite{ReiWel1}) that for a compressed $n$--polytope with special $m$--simplex the Ehrhart series can be written as
\begin{eqnarray*}
	\sum_{r \geq 0} i(P,r)t^r	& =	& \frac{h(t)}{(1-t)^{n-1}}
\end{eqnarray*}
with $h(t)$ being the $h$--polynomial of the boundary complex of a simplicial polytope in dimension $n - m$. Therefore, $h(t)$ is in particular symmetric and unimodal (i.e.: $h_0 \leq \ldots \leq h_{\lfloor n / 2 \rfloor}$).

The connection to toric rings shown by Athanasiadis, Ohsugi and Hibi in \cite{Atha1}, \cite{OhHi2} and \cite{OhHi3} is that a compressed polytope $P$ contains a special simplex if and only if the toric ring $K[P]$ of $P$ is \textit{Gorenstein}.

Independently from Ohsugi and Hibi, Bruns and Römer generalized Athanasiadis' theorem to $h$--vectors of Gorenstein polytopes (cf. \cite{BruRoe1}). See also e.g. \cite{ConHosTho1} and \cite{BatNill1}.

In \cite{Santos1} Santos disproves the Hirsch conjecture by constructing a counter example in dimension 43. The key part for the construction of this counter example is his proof for the existence of a 5--dimensional polytope with special 1--simplex\footnote{He calls a polytope with special 1--simplex a \textit{spindle}.} which has a length of 6. The length is here defined as the number of edges one has to pass to get from one vertex of the special simplex to the other one.\\

It is known that \textit{Birkhoff polytopes} and \textit{order polytopes} over graded posets come always with a special simplex. One can find some other examples in low dimensions quite easily; furthermore a connection between the \textit{reverse lexicographic triangulation} of such polytopes and \textit{simplicial joins} can be established. However, so far there has not been a systematic classification of this class of polytopes.\\

In this article we give a full classification of polytopes with special simplex in the following way:

Firstly, we define \textit{basis polytopes}. Note that we call two polytopal (resp. simplicial / polyhedral) complexes (and therefore in particular: two polytopes) \emph{isomorphic} if their face lattices are isomorphic.
\begin{defn}
Let $P$ be an $n$--polytope in $\fR^n$ with face complex $\cF(P)$ and special $m$--simplex $\Sig$. We define the \emph{basis polytope} $Q$ as the image of $P$ under the projection $\pi: \fR^n \ra \fR^{n-m}$ of the linear subspace, which is parallel to the affine basis space of $\Sig$, to the origin.
\label{DefBasispolytop}
\end{defn}

We use the notation
\begin{eqnarray*}
	  \PS_{(n,m)}(Q)	& := & \lf\{P \ \middle| \ \begin{array}{l}P \text{ is polytope with some special simplex } 					\Sig, \dim(P) = n, \\
	   					\dim(\Sig) = m, Q \text{ is basis polytope of } P
	                	       \end{array}\ri\}
\end{eqnarray*}

In Proposition \ref{SatzAequivSpezSimp} we deduce amongst other things by using results of Athanasiadis (\cite{Atha1}) as well as Reiner and Welker (\cite{ReiWel1}) that 
\begin{eqnarray*}
	\cF(\partial Q)	& =	& \cF(P) \bs \cV(\Sig)
\end{eqnarray*}
with $\cF(P) \bs \cV(\Sig)$ being the subcomplex of the faces of $P$ which does not contain any vertex of $\cV(\Sig)$. Note that it is not obvious that $\cF(P) \bs \cV(\Sig)$ is the boundary complex of a polytope.\\

Basis polytopes and the upper fact induce a natural distinction between \textit{meek} and \textit{wild} polytopes with special simplex:
\begin{defn}
Let $P$ be an $n$--polytope with special simplex $\Sig$ and basis polytope $Q$. Let $\cA$ be the affine basis space of the polytopal complex $\cF(P) \bs \cV(\Sig)$. We call $P$
\begin{enumerate}
 \item[(a)] \emph{meek}, if and only if $\dim Q = \dim \cA$.
 \item[(b)] \emph{wild}, if and only if $\dim Q < \dim \cA$ and the combinatorial structure of $P$ is different from the meek polytope $\conv (\cV(Q) \cup \cV(\Sig))$\footnote{Background for this condition is that it might be possible to shift some vertices of $Q$ out of the affine basis space without changing the combinatorial structure of $\conv (\cV(Q) \cup \cV(\Sig))$. Since we are only interested in combinatorial classification and geometric construction we want to omit these special cases.}. We use the notation
 \begin{eqnarray*}
	\PM_{(n,m)}(Q)	& := & \lf\{P \ \middle| \ \begin{array}{l} P \text{ is meek polytope with some special 						simplex } \Sig, \dim(P) = n, \\
	  					\dim(\Sig) = m, Q \text{ is basis polytope of } P
	                		\end{array}\ri\} \\
	\PW_{(n,m)}(Q)	& := & \lf\{P \ \middle| \ \begin{array}{l} P \text{ is wild polytope with some special 						simplex } \Sig, \dim(P) = n, \\
	  					\dim(\Sig) = m, Q \text{ is basis polytope of } P
	                		\end{array}\ri\}
\end{eqnarray*}
\end{enumerate}
\label{DefZahmWild}
\end{defn}

With these tools we firstly get the following theorem which classifies meek polytopes with special simplex:
\begin{thm}
 Let $P$ be a meek $n$--polytope with special $m$--simplex $\Sig$. Then $P$ has an $(n - m)$--dimensional basis polytope $Q$ (given by Definition \ref{DefBasispolytop}) such that
\begin{eqnarray*}&&
	\begin{array}{cccccccc}
		\exists \, i \in \fN, j \in \fN_{>0} :	& i + j	& =	& m	& \text{ and }	& P 	& =	& \py{i}\lf(\Sig_j \oplus Q\ri).
	\end{array}
\end{eqnarray*}
\label{ThmKlassSpezSimpRough}
\end{thm}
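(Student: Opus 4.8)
The plan is to pin down the unique point $o$ at which the affine hull of $\cV(\Sig)$ meets the affine basis space $\cA$, and then to read off the whole decomposition from the barycentric position of $o$ inside the simplex $\Sig$. Write $V:=\cV(\Sig)=\{v_0,\dots,v_m\}$, which is affinely independent, and $W:=\cV(P)\setminus V$. The basis polytope $Q=\pi(P)$ is $(n-m)$-dimensional, being the image of the full-dimensional $P$ under the surjective linear projection $\pi$ of Definition \ref{DefBasispolytop}; by Proposition \ref{SatzAequivSpezSimp} together with meekness, $\conv(W)$ is combinatorially isomorphic to $Q$, has dimension $n-m$, and $\cA=\operatorname{aff}(W)$. (If $m=n$ then $W=\emptyset$, $P=\Sig$, and the claim is trivial with $i=0$, $j=n$; assume $m<n$ from now on.) The first task is to show that $\operatorname{aff}(V)\cap\operatorname{aff}(W)$ is a single point $o$, that $o$ lies in the relative interior of $\conv(W)$, and that $o\in\Sig$. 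Since $\cV(P)$ affinely spans $\fR^n$, a dimension count leaves only two possibilities for $\operatorname{aff}(V)\cap\operatorname{aff}(W)$: a single point, or the empty set with the two direction spaces meeting in a line. The second is impossible, for it forces a hyperplane $H$ through all of $V$ with $W$ strictly on one side, so that $\Sig=P\cap H$ is a proper face of $P$ and some facet of $P$ contains all $m+1$ vertices of $\Sig$, contradicting Definition \ref{DefSpezSimpl}. The same ``forbidden face'' mechanism, applied now to supporting hyperplanes manufactured from separating affine functionals, yields $o\in\operatorname{relint}\conv(W)$ and $o\in\Sig$: were either to fail, one would produce a face of $P$ that either lies in $\cA$ and strictly contains $\conv(W)$ --- impossible, since every element of $\cF(P)\setminus\cV(\Sig)=\cF(\partial Q)$ has dimension less than $\dim Q$ --- or contains all of $V$, again impossible. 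I expect this to be the main obstacle; all that follows is dimension bookkeeping together with the standard descriptions of pyramids and of free sums.

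Next, expand $o=\sum_k\lambda_k v_k$ in barycentric coordinates with respect to $V$; by the previous paragraph all $\lambda_k\ge 0$ and $\sum_k\lambda_k=1$. Put $I:=\{k:\lambda_k=0\}$, $J:=\{k:\lambda_k>0\}$, $i:=|I|$, $j:=|J|-1$, so $i+j=m$; also $j\ge 1$, since $|J|=1$ would give $o=v_k\in\operatorname{relint}\conv(W)$, contradicting that $v_k$ is a vertex of $P$. Because $\operatorname{aff}(V)\cap\operatorname{aff}(W)=\{o\}$ while the direction space of $\operatorname{aff}(V)$ meets that of $\operatorname{aff}(W)$ only in $0$, for every $S\subseteq V$ the hull $\operatorname{aff}(S\cup W)$ meets $\operatorname{aff}(W)$ in $\{o\}$ exactly when $o\in\operatorname{aff}(S)$ and is disjoint from it otherwise; this determines $\dim\operatorname{aff}(S\cup W)$ precisely. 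Taking $S=V\setminus\{v_k\}$ shows that $v_k$ is the apex of a pyramid structure on $P$ if and only if $\lambda_k=0$, i.e.\ exactly for $k\in I$; taking nested such $S$ shows that these $i$ apices can be peeled off successively, giving $P=\py{i}(P_I)$ where $P_I:=\conv(\{v_k:k\in J\}\cup W)$ is an $(n-i)$-polytope with vertex set $\{v_k:k\in J\}\cup W$.

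It remains to identify $P_I$ with $\conv\{v_k:k\in J\}\oplus\conv(W)\cong\Sig_j\oplus Q$. The piece $\conv\{v_k:k\in J\}$ is a $j$-simplex; its affine hull and $\operatorname{aff}(W)$ are affine subspaces meeting exactly in the point $o$, which lies in the relative interior of $\conv\{v_k:k\in J\}$ (all $\lambda_k>0$ for $k\in J$ and they sum to $1$) and, as shown above, in the relative interior of $\conv(W)$; the dimensions of the two pieces add to $\dim P_I=j+(n-m)$; and $P_I$ is the convex hull of their union. These are precisely the conditions under which a polytope equals the free sum of the two, and the combinatorial type of a free sum is independent of the gluing point. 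Therefore $P=\py{i}(P_I)=\py{i}(\Sig_j\oplus Q)$ with $i\in\fN$, $j\in\fN_{>0}$ and $i+j=m$, which is the assertion.
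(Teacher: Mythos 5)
Your proof is correct, and it reaches the same decomposition as the paper but pins it down by a noticeably different (and more quantitative) mechanism. The paper's proof of Theorem \ref{ThmKlassSpezSimp} argues at the level of the face lattice: it asks which faces of $\partial\Sig$ survive in $\cF(P)$, claims that the missing ones are exactly those containing one fixed minimal missing face, and rules out ``further cases'' by an informal perturbation argument about moving vertices of $\Sig$ inside its affine hull. You instead locate the single intersection point $o$ of $\operatorname{aff}(\cV(\Sig))$ with $\operatorname{aff}(W)$, justify $o\in\Sig\cap\operatorname{relint}\conv(W)$ by the forbidden-face argument (which does go through: the affine functionals constant along the complementary direction space produce, if some barycentric coordinate were negative, a supporting hyperplane cutting out $\conv(W)$ as an $(n-m)$-face, contradicting $\cF(P)\bs\cV(\Sig)=\cF(\partial Q)$; and if $o\notin\operatorname{relint}\conv(W)$, a hyperplane violating condition (a) of Proposition \ref{SatzAequivSpezSimp}), and then read off $i=|I|$ and $j=|J|-1$ from the zero set of the barycentric coordinates of $o$. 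This makes explicit what the paper only gestures at --- that the entire combinatorial type is governed by the position of $\Sig\cap\operatorname{aff}(W)$ inside $\Sig$ --- and replaces the paper's somewhat hand-wavy ``no further cases are possible'' step by clean affine-hull dimension counts, which is a genuine gain in rigor. Two small blemishes: in the sentence beginning ``Because $\operatorname{aff}(V)\cap\operatorname{aff}(W)=\{o\}$\dots'' you surely mean that $\operatorname{aff}(S)$ (not $\operatorname{aff}(S\cup W)$, which contains $\operatorname{aff}(W)$) meets $\operatorname{aff}(W)$ in $\{o\}$ or not at all; and the identification of the geometric realization of $\cF(P)\bs\cV(\Sig)$ with $\partial\conv(W)$ deserves one explicit line (each maximal face, being a face of $P$ spanned by vertices of $W$, is a facet of $\conv(W)$, and the complex is a closed $(n-m-1)$-sphere, so it exhausts $\partial\conv(W)$). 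Neither affects the validity of the argument.
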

In this context ``$\py{i}$'' denotes the $i$--th \textit{pyramid} and ``$\Sig_j \oplus Q$'' the \textit{direct sum} of a $j$--simplex $\Sig_j$ and $Q$. Recall that the direct sum of two polytopes $P_1 \in \fR^n,P_2 \in \fR^m$ is the polytope given by
\begin{eqnarray*}
	P_1 \oplus P_2	& :=	& \lf\{\lam_1 \binom{v}{0} + \lam_2 \binom{0}{w} \in \fR^{n+m} : v \in P_1; w \in P_2; \lam_1,\lam_2 \in \fR_{\geq 0}; \lam_1 + \lam_2 \leq 1; \ri\}.
\end{eqnarray*}
Its boundary complex coincides with the polytopal join (cf. Definition \ref{DefSimplVerein}) of the boundary complex of the basis polytope and the boundary complex of a $j$--simplex.\\

With this notation Theorem \ref{ThmKlassSpezSimpRough} proves that for a fixed basis polytope $Q$ and fixed $n,m$ there are (up to labeling) exactly $m+1$ non--isomorphic meek polytopes with special simplex, i.e.:
\begin{eqnarray*}
	\# \PM_{(n,m)}(Q)	& =	& m+1	
\end{eqnarray*}
and in particular for every $n, m$ with $m \in \fN_{>0}, n \in \fN_{\geq m}$ there exists an $n$--polytope with special $m$--simplex. For arbitrary $n$ or $Q$ there exist infinitely many polytopes with special simplex if $n - m \geq 3$. Since the $f$--vectors of pyramids and direct sums are known the theorem yields the combinatorial structure of the class $\PM_{(n,m)}(Q)$ and we can therefore decide whether a polytope is meek and contains a special simplex just by knowing its $f$--vector. The theorem explains furthermore how polytopes in $\PM_{(n,m)}(Q)$ can be constructed in a combinatorial \textit{and} in a geometric way and hence classifies $\PM_{(n,m)}(Q)$ completely.\\

For $\PW_{(n,m)}(Q)$ we are unable to decide wether a polytope belongs to this class in a pure combinatorial way just by investigating its $f$--vector since the combinatorial structure of particular wild polytopes depends crucially on their basis polytopes (cf. Section \ref{SubSecWildePolytope}). 

We are able to give a classification in the following sense: We prove that every wild polytope with basis polytope $Q$ can be constructed by intersecting $\Sig \oplus Q$ with particular (via the vertices combinatorially defined) hyperplanes satisfying conditions which we present in Theorem \ref{ThmWildPolyCharac}. Therefore we present a full classification of all polytopes with special simplex. Theorem \ref{ThmWildPolyCharac} is constructive if one knows all particular hyperplanes satisfying the conditions of the Theorem for a specific basis polytope. In general it seems to be a hard task to find these hyperplanes; we present an explicit description for all polytopes with special simplex and a basis polytope of dimension 2 (Corollary \ref{CorWildPolyBasisPolyDim2}). Theorem \ref{ThmWildPolyCharac} yields furthermore an upper bound for the $f$--vector of wild polytopes with special simplex (Corollary \ref{CorBoundaries}):

Let $P \in \PW_{(n,k)}(Q)$ with special simplex $\Sig$ and $n \geq 3$ (for $n < 2$ we have $\PW_{(n,k)}(Q) = 0$). Then we have for its $f$--vector $f_P$ in particular:
\begin{eqnarray*}
		f_{P}^{(0)}	& =	& f_{(\Sig \oplus Q)}^{(0)} \\
		f_{P}^{(i)}	& <	& f_{(\Sig \oplus Q)}^{(i)} \quad \text{ for } i \in \{n-2,n-1\} \\
		f_{P}^{(i)}	& \leq	& f_{(\Sig \oplus Q)}^{(i)} \quad \text{ for } i \in \{1,\ldots,n-3\}.
\end{eqnarray*}
Here $f_{(\Sig \oplus Q)}$ denotes the $f$--vector of the direct sum which is explicitly computable.

Finally we discuss the 4--cube as an example for a wild polytope with special simplex which is non--trivial since its basis polytope has dimension 3 and thus is not covered by Corollary \ref{CorWildPolyBasisPolyDim2}. We prove in particular that the basis polytope of the $n$--cube is the zonotope given by the $(n-1)$ cube and the vector $(1,\ldots,1)$ (Proposition \ref{PropBasispolyCube}).

\subsection*{Acknowledgements} I would like to thank Hartwig Bosse, Christian Haase, Cordian Riener, Raman Sanyal, Reinhard Steffens and Theresa Szczepanski for their helpful comments. My very special thanks go to Thorsten Theobald who introduced me to the topic, supervised the master thesis (Diplomarbeit) the first part of this article is originally based on and strongly supported me in all intents and purposes from the first to the last moment.

\section{Preliminaries and examples}
\label{SecGrundlagen}
\subsection{Preliminaries}

We assume that the reader is familiar with standard geometric structures e.g.: polyhedral / simplicial complexes or $f$--vectors (cf. e.g. \cite{Zieg1}). In this section we introduce the further structures which are needed to achieve our results.\\

For our efforts we need a particular triangulation on polytopal complexes --- the \textit{reverse lexicographic triangulation} (cf. eg. \cite{Lee1}, \cite[p. 165]{Atha1}):

\begin{defn}
Let $\cF$ be a polytopal complex with vertexset  $\cV(\cF) :=\{v_1,\ldots,v_p\}$; let $\tau := (v_1,\ldots,v_p)$ be a total ordering on $\cV(\cF)$. Then we define the \emph{reverse lexicographic triangulation} (short: RLT) by:
\begin{eqnarray*}
	\tri_{\tau}(\emptyset)	& :=	& \emptyset \textrm{ and } \\
	\tri_{\tau}(\cF) 	& :=	& \tri_{\tau}(\cF \bs v_p) \cup \bigcup_{F \in M_{v_p}} \lf\{\conv(\{v_p\} \cup G) \ | \ G \in \tri_{\tau}(\cF(F)) \cup \{\emptyset\} \ri\},	
\end{eqnarray*}
with $M_{v_p}$ containing exactly the facets not containing $v_p$ of all maximal faces of $\cF$ containing $v_p$.
$\tri_{\tau}(\cF \bs v_p)$ and $\tri_{\tau}(\cF(F))$ are defined with respect to subsets of vertices of polytopal complexes and the ordering on the particular subset which is induced by $\tau$.
\label{DefRueckLexTri}
\end{defn}

Note that there are additional definitions which are equivalent to Definition \ref{DefRueckLexTri} (cf. e.g. \cite[p. 333]{Stan1} and \cite[p. 1253 et seq.]{JoZi1}; in the latter the ordering of the vertices is transposed). These triangulations coincide with the successive pulling of vertices into the negative halfspaces of the supporting hyperplanes containing the particular vertices. (cf. \cite[p. 78 et seq.]{Gruenb1}).\\

Now we introduce \textit{polytopal joins} and the \textit{quotient polytope}. Polytopal joins are needed for Lemmata from Reiner / Welker and Athanasiadis which we want to use and give furthermore another nice description of direct sums of polytopes. The quotient polytope is needed since basis polytopes are quotient polytopes and it allows us to present a lemma of Reiner and Welker which is essential to prove that two particular conditions are equivalent to the existence of a special simplex in a given polytope (Proposition \ref{SatzAequivSpezSimp}).\\

For polytopal and simplicial joins cf. \cite[p. 167]{Atha1} and \cite[p. 259, Definition 3.7]{ReiWel1} (observe that in all references only simplicial joins are used.):
\begin{defn}
Let $\tri_1, \tri_2$ be two polytopal complexes with disjoint vertex sets and affine basis spaces with trivial intersection in the interior. Then we define the \emph{polytopal join} $\tri_1 * \tri_2$ of $\tri_1, \tri_2$ as the polytopal complex containing the following facets $F$:
\begin{eqnarray*}
	F	& =	& \conv\{F_1,F_2\} \textrm{ with } \forall i \in \{1,2\}: F_i \textrm{ is a maximal face of } \tri_i.	
\end{eqnarray*}
The former faces of $\tri_1 * \tri_2$ are induced by the intersections of the facets (cf. \cite[p. 1193]{Tza1}).

If $\tri_1, \tri_2$ are simplicial complexes, we will call $\tri_1 * \tri_2$ a \emph{simplicial join}.
\label{DefSimplVerein}
\end{defn}

Note that for polytopal complexes $\tri_1, \tri_2$ we have $\dim(\tri_1 * \tri_2) = \dim(\tri_1) + \dim(\tri_2) + 1$.
If one of the two polytopal complexes $\tri_1, \tri_2$ (w.l.o.g.: $\tri_1$) of a polytopal join contains only the empty set, we have $\emptyset * \tri_2  = \tri_2$. This statement is in accord with the one about the dimension since $\dim(\emptyset) = -1$.

Let $\tri_1,\tri_2$ and $\tri_3$ be polytopal complexes whose affine basis spaces share no common non trivial subspace pairwise. Then the polytopal join is associative, i.e.:
\begin{eqnarray*}
	(\tri_1 * \tri_2) * \tri_3 	& =	& \tri_1 * (\tri_2 * \tri_3).
\end{eqnarray*}

Observe that we have for two polytopes $P \in \fR^n,Q \in \fR^m$ that
\begin{eqnarray*}
	\cF(\partial (P \oplus Q))	& =	& \cF(\partial P) * \cF(\partial Q)
\end{eqnarray*}
This is easy to see: Let $F_1 \in \cF(P), F_2 \in \cF(Q)$ facets of $P$ and $Q$ and let $A_P,A_Q$ denote the affine basis spaces of $P$ resp. $Q$. Then $F_1 \cap F_2 = \emptyset$ and $F_1 \not\parallel F_2$ since $A_P$ and $A_Q$ do online intersect in one point which is located in the interior of $P$ and $Q$. Hence $\conv(\cV(F_1) \cup \cV(F_2))$ is contained in a hyperplane $H$ of $\fR^{n+m}$. Since $H$ supports $F_1$ and $F_2$ all vertices $\cV(P \bs F_1)$ and $\cV(Q \bs F_2)$ are contained in $H^+$. Thus, $H$ is a supporting hyperplane of $P \oplus Q$. The inverse way follows directly from the definition of the direct sum.

\begin{defn}
Let $P \subseteq \fR^q$ be an $n$--polytope and $V$ an arbitrary subspace of $\fR^q$. Then the \emph{quotient polytope} $P / V \subseteq \fR^q / V$ is defined by
\begin{eqnarray*}
	P / V	& :=	& \{p + V \ | \ p \in P\}.
\end{eqnarray*}
Obviously $P / V$ can be identified with the image $\pi(P)$ of $P$ under a linear embedding $\pi : \fR^q \ra \fR^{q - \dim V}$ with kernel $V$ (cf. \cite[p. 167]{Atha1}, \cite[p. 260, Definition 3.10]{ReiWel1}).
\label{DefQuotPolytop}
\end{defn}
Notice that the basis polytope from Definition \ref{DefBasispolytop} hence is a particular quotient polytope.\\

With this definition we are able to formulate the following essential lemma (cf. \cite[ibid.]{Atha1}, cf. also \cite[p. 261, Proposition 3.12]{ReiWel1}):
\begin{lem}[Reiner, Welker / Athanasiadis]
Let $P$ be a $n$--polytope in $\fR^q$ with a triangulation $\tri$ isomorphic to $\Sig * \tri'$ with $\tri'$ denoting a (not exactly specified) simplicial complex and $\Sig$ a simplex which is not completely contained in the boundary of $P$. Let $V$ be the linear subspace of $\fR^q$ being parallel to the affine span of $\Sig$. Then the boundary complex of the quotient polytope $P / V \subseteq \fR^q / V$ is isomorphic to $\cF(P) \bs \cV(\Sig)$ and has a triangulation isomorphic to $\sig * \tri'$ with $\sig$ being an interior point of $P / V$.
\label{Lem2ReiWel1}
\end{lem}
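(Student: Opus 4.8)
The statement is essentially a ``change of coordinates'' result: the join structure $\tri = \Sig * \tri'$ of a triangulation of $P$ passes down, under the quotient map $\pi\colon \fR^q \ra \fR^q/V$, to a join structure $\sig * \tri'$ on a triangulation of $P/V$, with $\sig$ an interior point. The first thing I would do is fix notation: write $\Sig = \conv\{w_0,\dots,w_m\}$, let $V = \affspn(\Sig) - w_0$ be the linear subspace parallel to $\affspn(\Sig)$, and let $\pi\colon\fR^q\ra\fR^q/V$ be the projection (a linear embedding with kernel $V$, as in Definition \ref{DefQuotPolytop}). Since $\Sig$ is not contained in $\partial P$, its relative interior meets the interior of $P$; I would first check that $\pi(\Sig)$ is a single point $\sig := \pi(w_0) = \dots = \pi(w_m)$ lying in the \emph{interior} of $P/V = \pi(P)$. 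The latter holds because $\ttint(P)$ maps onto $\ttint(\pi(P))$ under the open map $\pi$ (restricted to $\affspn(P)$), and $\sig$ is the image of an interior point of $P$ since $\relint(\Sig)\cap\ttint(P)\neq\emptyset$.

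The combinatorial core is the identification of the boundary complex. Here I would invoke exactly the mechanism the paper has already set up: every face of $P$ either contains some vertex of $\Sig$ or contains none, and the faces containing no vertex of $\Sig$ are precisely the faces of $P$ that survive, unchanged, in $\pi(P)$; while the faces meeting $\cV(\Sig)$ collapse toward the interior point $\sig$. Concretely, I would argue that $\pi$ restricted to the union of faces in $\cF(P)\bs\cV(\Sig)$ is injective and sends this subcomplex isomorphically to a subcomplex of $\cF(\partial(P/V))$ --- a face $F$ disjoint from $\cV(\Sig)$ is cut out of $P$ by a supporting hyperplane $H$, and because $H\supseteq F$ while every vertex of $\Sig$ lies strictly on one side, a small dimension count shows $H\supseteq V$ up to translation, so $H$ descends to a supporting hyperplane $\pi(H)$ of $\pi(P)$ with $\pi(H)\cap\pi(P) = \pi(F)$. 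Conversely any proper face of $P/V$ pulls back to a proper face of $P$ avoiding $\cV(\Sig)$, because $\sig\in\ttint(P/V)$ forces the preimage hyperplane to separate all of $\Sig$. This gives the face-lattice isomorphism $\cF(\partial(P/V))\cong\cF(P)\bs\cV(\Sig)$ claimed. (Strictly, this is the assertion foreshadowed in Proposition \ref{SatzAequivSpezSimp}, and one could alternatively just cite that once available, but the self-contained hyperplane argument above is short enough to include.)

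For the triangulation statement, I would take the given triangulation $\tri\cong\Sig*\tri'$ of $P$ and push it forward. Each maximal simplex of $\tri$ has the form $\conv(\cV(\Sig)\cup G)$ for $G$ a maximal simplex of $\tri'$ (by the definition of the join $\Sig*\tri'$); applying $\pi$ collapses $\cV(\Sig)$ to the single point $\sig$, so $\pi(\conv(\cV(\Sig)\cup G)) = \conv(\{\sig\}\cup\pi(G))$. One must check that (i) $\pi$ is injective on each such simplex --- true because $\conv(\cV(\Sig)\cup G)$ is full-dimensional in $\affspn(\Sig)+\affspn(G)$, which meets $V$ only in $\affspn(\Sig)$'s direction, i.e. $\pi$ restricted to $\affspn(G)$ is injective, and $\pi$ is injective after fixing the $\Sig$-direction --- more carefully, since $\tri$ is a triangulation of the full-dimensional $P$ and $\Sig$ contributes exactly the $V$-directions, the images $\conv(\{\sig\}\cup\pi(G))$ are non-degenerate $(\dim P-\dim V)$-simplices; and (ii) distinct maximal simplices of $\tri$ have images with disjoint interiors and together cover $\pi(P)$, which follows because $\pi$ maps $P$ onto $P/V$ and the only overlaps introduced are along the collapsed $\Sig$-direction, which lies in the interior. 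Thus the pushed-forward complex is a triangulation of $P/V$, and structurally it is $\{\sig\}*\pi(\tri') = \sig * \tri'$ with $\pi(\tri')\cong\tri'$ (the restriction of $\pi$ to $\affspn(\tri')$ being injective, since $\tri'\subseteq\partial P$ lies in faces avoiding $\cV(\Sig)$, handled in the previous paragraph).

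\textbf{Main obstacle.} The delicate point is not the boundary isomorphism --- that is a clean hyperplane argument --- but verifying that $\pi$ does \emph{not} degenerate the triangulation $\tri'$ or glue together simplices of $\tri$ that should stay separate. In other words, the crux is the transversality claim: $\affspn(\Sig)$ (equivalently $V$) meets the affine spans of the faces in $\tri'$ only trivially, so that $\pi$ is injective on each and the combinatorial type is preserved. This is exactly the hypothesis packaged into the word ``join'' in ``$\Sig * \tri'$'' via Definition \ref{DefSimplVerein} (disjoint vertex sets and affine basis spaces with trivial interior intersection), so the proof amounts to unwinding that definition carefully and combining it with ``$\Sig\not\subseteq\partial P$'' to land the apex $\sig$ in the interior; I would spend most of the write-up making that bookkeeping precise.
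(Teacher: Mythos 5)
You should first be aware that the paper contains no proof of this lemma at all: it is imported verbatim (in Athanasiadis' modified form) from \cite{ReiWel1} and \cite{Atha1}, so there is no in-paper argument to compare against and your attempt must stand on its own. Its overall architecture --- collapse $\Sig$ to a point $\sig$, note that $\sig$ lands in the interior of $P/V$ because $\Sig\not\subseteq\partial P$, and push the join triangulation forward to $\sig*\tri'$ --- is the natural one and is in the spirit of the cited sources. Your third paragraph (the triangulation statement) is essentially sound, and the transversality you flag as the main obstacle is in fact automatic: each maximal cell $\conv(\cV(\Sig)\cup\cV(G))$ of $\tri$ is a genuine simplex of dimension $\dim\Sig+\dim G+1$, so the direction space of $\affspn(G)$ meets $V$ trivially and $\pi$ is injective on each cell of $\tri'$.

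The genuine gap is in your second paragraph, the part you call a ``clean hyperplane argument.'' For a face $F$ of $P$ containing no vertex of $\Sig$ you take \emph{a} supporting hyperplane $H\supseteq F$ and assert that, since $\cV(\Sig)$ lies strictly on one side, ``a small dimension count shows $H\supseteq V$ up to translation.'' This is false for an arbitrary supporting hyperplane (a generic supporting hyperplane at a vertex of the $3$--cube off the long diagonal has the entire diagonal strictly on one side yet is not parallel to it), and for a well-chosen one it amounts to the claim that the normal cone of $F$ meets $V^{\perp}$ in its relative interior --- which is the actual content of the lemma, does not follow from any dimension count, and is precisely where the hypothesis $\tri\cong\Sig*\tri'$ must enter. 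Your parenthetical fallback of citing Proposition \ref{SatzAequivSpezSimp} is circular, since that proposition is proved in the paper \emph{using} this lemma. The natural repair is to reverse your order of argument: first establish the triangulation $\sig*\tri'$ of $P/V$ with $\sig$ interior (your third paragraph), deduce that $\partial(P/V)$ is triangulated by $\pi(\tri')\cong\tri'$, and then match the faces of $P/V$ with the faces of $P$ that are unions of cells of $\tri'$, i.e.\ with $\cF(P)\bs\cV(\Sig)$. Your converse direction --- a proper face of $P/V$ pulls back to a face of $P$ missing $\Sig$ because its supporting functional is constant on $\Sig$ while $\Sig$ meets the interior of $P$ --- is correct as stated.
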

The upper lemma was originally given by Reiner and Welker in \cite{ReiWel1}. Later it was cited and modified by  Athanasiadis in \cite{Atha1} into the upper version which we want to use here.

\subsection{Examples}

Before we start to prove our results we recall some of the known examples of polytopes with special simplices.

Be aware that special simplices of a polytope $P$ may never completely lie in the boundary, since in this case they would be contained in one particular facet of $P$ in contradiction to the Definition \ref{DefSpezSimpl}.

\begin{exa}
There are quite few combinatorially different polytopes with special simplices in low dimensions. Here we present two of them:
\begin{figure}[H]
	\begin{center}
	\begin{picture}(380,190)(0,0)
		\put(0,0){\includegraphics[scale=0.38]{./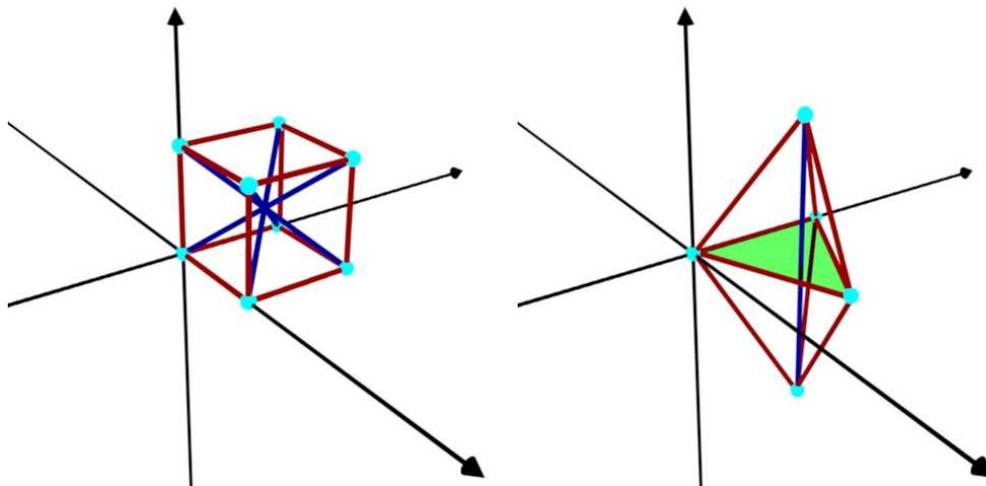} }
	\end{picture}
	\caption[The 3--cube and the bipyramid of a triangle with their special simplices.]{Left: the four special 1--simplices (blue) in a 3--dimensional cube; right: the special 1-- (blue) and 2--simplex (green) in a 3--dimensional bipyramid with triangular base (both lattice models).}
	\label{AbbSpezielleSimplizesLowDim}
	\end{center}
\end{figure}
\label{ExaSpezSimp1}
\end{exa}

The set of all doubly stochastic $n \times n$ matrices (i. e.: matrices with positive entries such that all row-- and column sums are 1) is a polytope in $\fR^{n \times n}$ --- the \emph{Birkhoff polytope} $P_n$.

Birkhoff polytopes are $0 / 1$--polytopes; they have $n!$ vertices (the permutation matrices of $S_n$), $n^2$ facets (in each case consisting of the set of all matrices with $x_{ij} = 0$) and dimension $(n-1)^2$. A complete classification is given by (cf. \cite[p. 20]{Zieg1}):
\begin{eqnarray*}
	P_n	& =	& \lf\{X \in \fR^{n \times n} \ \middle| \ \forall i,j \in \{1,\ldots,n\}: x_{ij} \geq 0 \w \sum_{k = 1}^n x_{ik} = \sum_{k = 1}^n x_{kj} = 1 \ri\}.
\end{eqnarray*}

\begin{exa}
Let $P_n$ be the $n$--th Birkhoff polytope. Let $v_1,\ldots,v_n$ be the $n \times n$ permutation matrices, which correspond to the elements of the cyclic subgroup of the symmetric group generated by the permutation $(1 \ 2 \ \cdots \ n)$  (or $n$ arbitrary permutation matrices with pairwise disjoint support). Then $v_1,\ldots,v_n$ are the vertices of a special simplex in $P_n$, because every facet of $P_n$ is defined by an equation of the form $x_{ij} = 0$ in $\fR^{n \times n}$ and does not contain exactly one of the vertices $v_1,\ldots,v_n$. This is obvious since the $v_k$ have the form:
$$\lf(\begin{array}{cccccc}
 0 	& 1 	 & 0 	  & \cdots	& 0	 & 0 	  \\
 0 	& 0 	 & 1	  & \ddots	& 0	 & 0 	  \\
 \vdots & \vdots & \ddots & \ddots	& \ddots & \vdots \\
 0	& 0	 & 0	  & \ddots	& 1	 & 0 	  \\
 0	& 0	 & 0 	  & \cdots	& 0	 & 1	  \\
 1	& 0	 & 0 	  & \cdots	& 0	 & 0 	  \\
\end{array}\ri),
\lf(\begin{array}{cccccc}
 0 	& 0 	 & 1 	  & \cdots	& 0	 & 0 	  \\
 \vdots & \vdots & \ddots & \ddots	& \ddots & \vdots \\
 0 	& 0 	 & 0	  & \ddots	& 1	 & 0 	  \\
 0	& 0	 & 0	  & \cdots	& 0	 & 1 	  \\
 1	& 0	 & 0 	  & \cdots	& 0	 & 0	  \\
 0	& 1	 & 0 	  & \cdots	& 0	 & 0 	  \\
\end{array}\ri),
\ldots,$$
i.e. in every row the 1 runs through every column successively (cf. \cite[p. 165]{Atha1}).
\label{Exa1Atha1}
\end{exa}

Let $\Omega := \{x_1,\ldots,x_n\}$ be a poset with respect to the relation $\leq_{\Omega}$. Then the set of all maps $f : \Omega \ra \fR$ forms an $n$--dimensional $\fR$--vectorspace (we identify every $f$ with its image vector). Using this fact the \emph{order polytope} $\cO(\Omega)$ is defined as the set of all $f(\Omega) \subset \fR^n$ which satisfy (cf. e.g. \cite{Stan2}):
\begin{enumerate}
	\item[(a)] $\forall x \in \Omega : 0 \leq f(x) \leq 1,$
	\item[(b)] $\forall x,y \in \Omega : \lf(\neg \exists z: z \neq x \w z \neq y \w x \leq_{\Omega} z \leq_{\Omega} y\ri) \Ra f(x) \leq f(y).$
 \end{enumerate}
The order polytope $\cO(\Omega)$ has dimension $n$. $f \in \cO(\Omega)$ is contained in a specific facet of the order polytope if and only if one of the following condition holds:
\begin{enumerate}
 \item[(a)] $\exists x \in \Omega: f(x) = 0 \w x \textrm{ is minimal with resp. to } \leq_{\Omega}.$
 \item[(b)] $\exists x \in \Omega: f(x) = 1 \w x \textrm{ is maximal with resp. to } \leq_{\Omega}.$
 \item[(c)] $\exists x,y \in \Omega: x \neq y \w f(x) = f(y) \w y \textrm{ covers } x \textrm{ with resp. to } \leq_{\Omega}$
 \end{enumerate}
(recall that in posets $(\Omega,\leq_{\Omega})$ $y$ \emph{covers} $x$ if and only if there is no $z \in \Omega \bs \{x,y\}$ with $x \leq_{\Omega} z \leq_{\Omega} y$).

An \emph{(order) ideal} or \emph{filter} in $\Omega$ is a subset $I \subseteq \Omega$ satisfying the following condition:
\begin{eqnarray*}
	\forall i,j \in \Omega: \lf(i \leq_{\Omega} j \w i \in I\ri)	& \Ra	& j \in I.
\end{eqnarray*}
The vertices of $\cO(\Omega)$ are the images of the filters of $\Omega$ under the characteristic function (with the filters interpreted as vectors with respect to the relation $\leq_{\Omega^0}$) (cf. \cite[p. 10 et seq.]{Stan2}).

\begin{exa}
Let $\Omega:= \{x_1,\ldots,x_n\}$ be a graded poset with respect to the relation $\leq_{\Omega}$ with rank $m-1$ ($1 \leq m \leq n$; i.e.: the maximal chain consists of $m-1$ elements). Let $P := \cO(\Omega)$ be the order polytope of $\Omega$ in $\fR^n$. Let $v_i$ be the characteristic vector of the filter consisting of all elements in $\Omega$ with rank greater than $m - i$  for all $i \in \{1,\ldots,m\}$ ($v_i$ is therefore a vertex of $P$). The upper equations defining the facets of an order polytope yield that $v_1,\ldots,v_m$ are the vertices of a special simplex in $P$ (cf. \cite[p. 165]{Atha1}).

To substantiate the intuition of order polytopes let $\Omega := \{x_1,x_2,x_3\}$ and look at the three order polytopes $\cO_{(1,1,1)}, \cO_{(2,1)}$ and $\cO_{(1,2)}$ given by the following three order structures:
\begin{eqnarray*}
	\begin{xy}
   	\xymatrix{
			x_3		\\
			x_2 \ar@{-}[u]	\\
			x_1 \ar@{-}[u]
 		}
  	\end{xy}
	\quad
  	\begin{xy}
   		\xymatrix{
					& x_3	&		\\
			x_1 \ar@{-}[ru] &	& x_2 \ar@{-}[lu]
 		}
  	\end{xy}
	\quad
  	\begin{xy}
   		\xymatrix{
			x_2	&				& x_3	\\
				& x_1 \ar@{-}[lu] \ar@{-}[ru]	&
 		}
  	\end{xy}
\end{eqnarray*}
Then one can show easily that $\cO_{(1,1,1)}$ is a simplex and its special simplex is the polytope itself; $\cO_{(2,1)}$ and $\cO_{(1,2)}$ are both 3--dimensional polytopes in form of a pyramid with square base each containing a special 2--simplex:
\begin{figure}[H]
	\begin{center}
	\begin{picture}(400,200)(0,0)
		\put(0,0){\includegraphics[scale=0.4]{./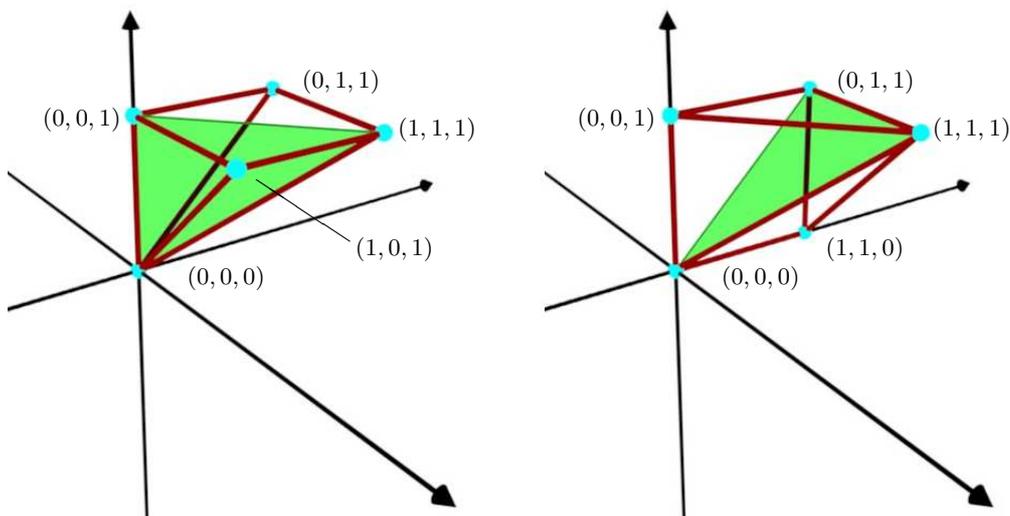}}
		\put(67,89){\mbox{\scriptsize{$(0,0,0)$}}}
		\put(13,149){\mbox{\scriptsize{$(0,0,1)$}}}
		\put(110,163){\mbox{\scriptsize{$(0,1,1)$}}}
		\put(146,145){\mbox{\scriptsize{$(1,1,1)$}}}
		\put(130,100){\mbox{\scriptsize{$(1,0,1)$}}}
		\put(128,105){\line(-3,2){35}}
		\put(267,89){\mbox{\scriptsize{$(0,0,0)$}}}
		\put(213,149){\mbox{\scriptsize{$(0,0,1)$}}}
		\put(310,163){\mbox{\scriptsize{$(0,1,1)$}}}
		\put(346,145){\mbox{\scriptsize{$(1,1,1)$}}}
		\put(306,100){\mbox{\scriptsize{$(1,1,0)$}}}
	\end{picture}
  	\caption[Two order polytopes with their special simplices.]{The order polytopes $\cO_{(2,1)}$ and $\cO_{(1,2)}$ with their special simplices (green) (lattice models).}
  	\label{AbbOrdnungspolytopeSpezSimp}
	\end{center}
\end{figure}
\label{Exa2Atha1}
\end{exa}

Raman Sanyal advised me to the class of weak Hannar polytopes: A polytope $P$ is called \emph{weakly Hannar} if $P = \conv(\cV(F) \cup \cV(-F))$ for every facet $F$ of $P$. Obviously every weak Hannar polytope is centrally symmetric (cf. \cite{Hansen1}, \cite{SanWerZieg1}).
\begin{exa}
In a weak Hannar polytope every pair of points $(x,-x)$ forms a special 1--simplex. This is easy to see since for every facet $F$ $x$ is contained in $F$ or $-F$ by definition. If $x \in F$ we have $-x \in -F$ due to the fact that weak Hannar polytopes are centrally symmetric and for the same reason there is no facet with $x,-x \in F$. Hence $\{x,-x\}$ is a special simplex.
\label{ExaWeaklyHannar}
\end{exa}

\section{Equivalence conditions for polytopes with special simplices}
\label{AbsExaUndAequivSatz}

Although the original Definition \ref{DefSpezSimpl} of special simplices is easy to understand it is not very easy to work with. Hence in Proposition \ref{SatzAequivSpezSimp} we introduce two conditions which are together equivalent to the existence of a special simplex. This Proposition \ref{SatzAequivSpezSimp} is the foundation to establish our classification of meek polytopes with special simplex. To prove it we need the following lemma of Athanasiadis (cf. \cite[p. 167 et seq.]{Atha1}):

\begin{lem}[Athanasiadis]
Let $P$ be an $n$--dimensional polytope and let $\tau := (v_p,\ldots,v_1)$ be an ordering of the vertices of $P$ such that $\cV(\Sig) := \{v_1,\ldots,v_k\}$ is the vertex set of a special simplex $\Sig$ in $P$. Let $\tri$ be the simplicial complex on the set $\{v_{k+1},\ldots,v_p\}$ induced by the reverse lexicographic triangulation $\tri_{\tau'}(\cF(P) \bs \cV(\Sig))$ with respect to $\tau' := (v_{k+1},\ldots,v_p)$. Then the following assertions hold:
\begin{enumerate}
	\item[(a)] The reverse lexicographic triangulation $\tri_{\tau}(P)$ of $P$ with respect to $\tau$ is isomorphic to the simplicial join $\Sig * \tri$.
	\item[(b)] $\tri$ is isomorphic to the boundary complex of a simplicial polytope of dimension $n - k + 1$.
\label{Lem2Atha1}
\end{enumerate}
\end{lem}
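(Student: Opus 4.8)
For part (a) the plan is to peel off the vertices $v_1,\dots,v_k$ of $\Sig$ one by one, starting from the outermost cone of the reverse lexicographic triangulation and working inward. Put $\cC_j := \cF(P) \bs \{v_1,\dots,v_j\}$ for $0 \le j \le k$, so that $\cC_0 = \cF(P)$ and $\cC_k = \cF(P) \bs \cV(\Sig)$; one checks that each $\cC_j$ is a polytopal complex, pure of dimension $n-j$. The decisive use of the special simplex is the observation that for every $j \le k$ \emph{every maximal face of $\cC_{j-1}$ contains $v_j$} (in fact all of $v_j,\dots,v_k$): a maximal face $G$ of $\cC_{j-1}$ can be written $G = F_1 \cap \dots \cap F_{j-1}$ with each $F_l$ a facet of $P$ satisfying $v_l \notin F_l$ — for each $l$ some facet containing $G$ must avoid $v_l$, for otherwise $v_l \in G$ — and since $\Sig$ is special, $v_l \notin F_l$ forces $F_l \supseteq \cV(\Sig) \bs \{v_l\}$, so intersecting gives $\{v_j,\dots,v_k\} \subseteq G$. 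Combining this with the elementary fact that any vertex $v$ of a polytope $M$ satisfies $M = v * \operatorname{antistar}_M(v)$ — run the ray from $v$ through a given point of $M$ to $\partial M$; if it hits a facet through $v$, recurse inside that facet — one obtains $\cC_{j-1} = v_j * \cC_j$, both as point sets and as complexes.

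Next I would insert this into the recursion of Definition \ref{DefRueckLexTri}. Since $v_j$ is the last vertex of $\tau$ among $\{v_j,\dots,v_p\}$ and belongs to every maximal face of $\cC_{j-1}$, the index set $M_{v_j}$ in that recursion is precisely the set of maximal faces of $\cC_j$; using that the reverse lexicographic triangulation of a polytopal complex restricts on each face to the reverse lexicographic triangulation of that face (a locality property of pulling triangulations, immediate by induction on the number of vertices from Definition \ref{DefRueckLexTri}, cf. also \cite{Lee1}), the union over $F \in M_{v_j}$ of $\tri_\tau(\cF(F))$ equals $\tri_\tau(\cC_j)$, whence
\[
	\tri_\tau(\cC_{j-1}) \;=\; \tri_\tau(\cC_j)\,\cup\,\bigl\{\,v_j * G : G \in \tri_\tau(\cC_j) \cup \{\emptyset\}\,\bigr\} \;=\; v_j * \tri_\tau(\cC_j).
\]
Iterating for $j = 1,\dots,k$ and using that the iterated join $v_1 * v_2 * \dots * v_k$ of the vertices of $\Sig$ is $\Sig$ itself, this telescopes to $\tri_\tau(\cF(P)) = \Sig * \tri_\tau(\cC_k) = \Sig * \tri$, which is statement (a). (The peeling stops at $j=k$ because a facet of $P$ omits exactly one vertex of $\Sig$, so $\cC_k$ contains no facet of $P$ among its faces.)

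For part (b) I would hand the situation over to the cited lemmata. By (a), $P$ carries a triangulation $\tri_\tau(P) \cong \Sig * \tri$, and $\Sig$ is not contained in $\partial P$ (a special simplex never is; see the remark preceding Example \ref{ExaSpezSimp1}). Hence Lemma \ref{Lem2ReiWel1}, applied with $\tri' := \tri$ and with $V$ the linear subspace parallel to $\affspn(\Sig)$, yields that $\cF(P) \bs \cV(\Sig) \cong \cF(\partial(P/V))$ is the boundary complex of the polytope $R := P/V$ of dimension $\dim P - \dim\Sig = n-(k-1) = n-k+1$, and that under this isomorphism $\tri$ becomes a triangulation of $\partial R$. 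Since $\tri = \tri_{\tau'}(\cF(P)\bs\cV(\Sig))$ is a reverse lexicographic (pulling) triangulation, the corresponding triangulation of $\partial R$ is the pulling triangulation of the boundary of $R$, which is well known to be combinatorially the boundary complex of a simplicial polytope of the same dimension (geometrically, pull the vertices of $R$ successively into general position; cf. \cite{Gruenb1}, \cite{Lee1}). Thus $\tri$ is the boundary complex of a simplicial $(n-k+1)$-polytope — consistently with $\dim\tri = n - \dim\Sig - 1 = n-k$ — which is (b).

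The step I expect to cost the most work is the bookkeeping inside (a): one has to verify that the antistar formed inside the complex $\cC_{j-1}$ coincides with the union of the antistars formed inside its maximal faces, that the antistar of a vertex in a polytope really is the union of the facets missing that vertex, that each $\cC_j$ is pure of the claimed dimension, and that the reverse lexicographic triangulation is compatible with passing to faces and subcomplexes — each true, but each needing the special simplex hypothesis kept active throughout. For (b) the only nontrivial ingredient beyond Lemma \ref{Lem2ReiWel1} is the polytopality of pulling triangulations of polytope boundaries.
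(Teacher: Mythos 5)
This lemma is quoted from Athanasiadis with a citation and the paper supplies no proof of it, so there is no in-paper argument to compare yours against; what you have written is in effect a reconstruction of Athanasiadis' original proof, and it is correct. The engine of (a) — a maximal face of $\cC_{j-1} := \cF(P)\bs\{v_1,\ldots,v_{j-1}\}$ equals an intersection of facets $F_l$ with $v_l\notin F_l$, hence by specialness contains $v_j,\ldots,v_k$, so the recursion of Definition \ref{DefRueckLexTri} peels off $v_j$ as a cone point and telescopes to $\Sig * \tri$ — is exactly the right mechanism, and your identification of $M_{v_j}$ with the set of maximal faces of $\cC_j$ (granting purity of the $\cC_j$ and locality of the pulling triangulation, which you correctly flag as the bookkeeping to be done) is what closes the recursion. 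One intermediate claim is overstated: $\cC_{j-1} = v_j * \cC_j$ holds as point sets but not as polytopal complexes — already for the $3$--cube the square facets through $v_1$ are faces of $\cC_0$ yet are not pyramids over faces of the antistar — but you never use the complex-level identity, only the displayed identity of triangulations, so nothing breaks. For (b), invoking Lemma \ref{Lem2ReiWel1} is legitimate and not circular (it is an independent Reiner--Welker result, used in the same way by Athanasiadis and later by Proposition \ref{SatzAequivSpezSimp}), the dimension count $n-k+1$ is right, and the remaining ingredient — that a pulling triangulation of a polytope boundary is combinatorially the boundary complex of a simplicial polytope obtained by pulling the vertices geometrically — is the correct and standard final step.
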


We can now present our equivalence proposition. The two conditions we mentioned above are firstly that the subcomplex of the original polytope $P$, which is left after eliminating all faces containing vertices of the special simplex, is the boundary complex of a \textit{polytope} $Q$ (the basis polytope) and secondly that \textit{every} hyperplane containing the special simplex divides $Q$. We express this formally:

\begin{prop}
Let $P$ be an $n$--polytope with $\cV(P) := \{v_0,\ldots,v_p\}$. Then $P$ contains a special $m$--simplex $\Sig$ (w.l.o.g.: $\cV(\Sig) = \{v_0,\ldots,v_m\}$ with $1 \leq m \leq n-1$) if and only if the following two conditions hold:
\begin{enumerate}
 \item[(a)] For every hyperplane $H \subset \fR^n$ with $\Sig \subset H$ holds:
 \begin{eqnarray*}
	\exists i,j \in \{m+1,\ldots,p\}: v_i \in H^+	& \w	& v_j \in H^-.
 \end{eqnarray*}
 \item[(b)] There is a basis polytope $Q \subset \fR^{n-m}$ (recall Definition \ref{DefBasispolytop}) such that
 \begin{eqnarray*}
	\cF(P) \bs \cV(\Sig) & =	& \cF(\partial Q).
 \end{eqnarray*}
\end{enumerate}
\label{SatzAequivSpezSimp}
\end{prop}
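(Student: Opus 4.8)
The plan is to prove the two implications separately: the direction ``special simplex $\Rightarrow$ (a), (b)'' is a short deduction from Lemmas~\ref{Lem2Atha1} and~\ref{Lem2ReiWel1}, whereas the converse is where the actual work lies.

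\textbf{Special simplex $\Rightarrow$ (a) and (b).} Assume $\Sig$ is a special $m$--simplex. For (a), I would argue by contradiction: if some hyperplane $H \supseteq \Sig$ had all of $v_{m+1},\ldots,v_p$ on one closed side, then, since $v_0,\ldots,v_m$ lie on $H$ as well, $H$ would be a supporting hyperplane of $P$, so $H \cap P$ would be a proper face of $P$ containing $\Sig$ and hence would lie in a facet containing all $m+1$ vertices of $\Sig$ --- contradicting specialness. (This argument also re-proves that $\Sig \not\subseteq \partial P$.) For (b), I would order $\cV(P)$ so that $\cV(\Sig)$ forms the last block, invoke Lemma~\ref{Lem2Atha1}(a) to see that the reverse lexicographic triangulation of $P$ is isomorphic to a simplicial join $\Sig * \tri$, and then apply Lemma~\ref{Lem2ReiWel1} with $V$ the (necessarily $m$--dimensional) linear subspace parallel to $\affspn(\Sig)$: since $\Sig$ is not contained in $\partial P$, that lemma yields that the boundary complex of $P/V$ is isomorphic to $\cF(P) \bs \cV(\Sig)$. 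As $Q := P/V$ is precisely the basis polytope of Definition~\ref{DefBasispolytop} and has dimension $n-m$, this is exactly (b).

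\textbf{(a) and (b) $\Rightarrow$ special simplex.} Assume (a) and (b) and put $\Sig := \conv\{v_0,\ldots,v_m\}$; the aim is to show that every facet of $P$ meets $\{v_0,\ldots,v_m\}$ in exactly $m$ points and that $v_0,\ldots,v_m$ are affinely independent. First, no facet $F$ can contain all of $v_0,\ldots,v_m$: otherwise its supporting hyperplane contains $\Sig$, and since this hyperplane supports $P$ no vertex of $P$ is strictly on either side of it, contradicting (a); hence every facet contains at most $m$ of the $v_i$. Second --- and this is the step that uses (b) --- every facet contains at least $m$ of the $v_i$: by (b) the complex $\cF(P)\bs\cV(\Sig)$ is the boundary complex of an $(n-m)$--polytope, so all its faces have dimension $\le n-m-1$; on the other hand, a short induction on dimension (repeatedly replacing a polytope by a facet of it that misses one prescribed vertex) shows that for a facet $F$ of $P$ meeting $\cV(\Sig)$ in $s$ vertices there is a face $T$ of $F$ with $\cV(T)\cap\cV(\Sig)=\emptyset$ and $\dim T \ge (n-1)-s$; since such a $T$ lies in $\cF(P)\bs\cV(\Sig)$ we get $(n-1)-s \le n-m-1$, i.e.\ $s \ge m$. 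So every facet contains exactly $m$ of the $v_i$. Finally, affine independence: if some $v_{i_0}$ lay in $\affspn\{v_j : j\neq i_0\}$, then a facet of $P$ omitting $v_{i_0}$ would, by what we just proved, contain all the other $v_j$, so its supporting hyperplane would contain $\affspn\{v_j : j\neq i_0\}$ and hence $v_{i_0}$ --- contradiction. Therefore $\Sig$ is a genuine $m$--simplex every facet of $P$ meeting it in exactly $m$ vertices, i.e.\ a special simplex.

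The hardest point is the ``$\ge m$'' step of the converse: condition (b) becomes useful only once it is translated into a \emph{dimension} bound on $\cF(P)\bs\cV(\Sig)$, and this forces the auxiliary combinatorial fact that a $d$--polytope possesses, for any $s$ of its vertices, a face of dimension at least $d-s$ avoiding all of them. I also expect the affine-independence step to be an easy-to-overlook gap, since Definition~\ref{DefSpezSimpl} already presupposes that $\Sig$ is genuinely $m$--dimensional.
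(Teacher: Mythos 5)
Your proposal is correct and follows essentially the same route as the paper: the forward direction via the supporting--hyperplane contradiction for (a) and the combination of Lemmas~\ref{Lem2Atha1} and~\ref{Lem2ReiWel1} for (b), and the converse via the observation that deleting a vertex from a face lowers its dimension by at most one, played off against the dimension $n-m-1$ of the maximal faces of $\cF(\partial Q)$. Your only departure is that you split the paper's single step ``each facet contains $m$ affinely independent vertices of $\cV(\Sig)$'' into a cardinality bound $s \ge m$ plus a separate, explicit affine--independence argument; this is a refinement of a step the paper states tersely, not a genuinely different proof.
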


\begin{proof}
Assume w.l.o.g. that $v_0$ is the origin and therefore each of the observed $\Sig$ containing affine hyperplanes is a hyperplane of $\fR^n$.\\

\noindent Let $\Sig$ be a special simplex in $P$. Assume there is a hyperplane $H$ containing $\Sig$ with $v_i \in H^+$ for all $i \in \{m+1,\ldots,p\}$. Then we have in particular $P \subset H^+$. Since $\Sig \subset P$ and $\Sig \subset H$ this implies that $H$ is a bounding hyperplane of $P$ and $\Sig \subset \partial P$. But then there exists a facet containing $\Sig$ completely in contradiction to Definition \ref{DefSpezSimpl} of special simplices. This implies condition (a).

We additionally know by Lemma \ref{Lem2Atha1} that the reverse lexicographic triangulation $\tri_{\tau}(P)$ with respect to $\tau := (v_p,\ldots,v_0)$ of $P$ is isomorphic to the simplicial join $\Sig * \tri'$ (with $\tri' := \tri_{\tau'}(\cF(P) \bs \cV(\Sig))$). Therefore the conditions of Lemma \ref{Lem2ReiWel1} are fullfilled which in particular implies that $\cF(P) \bs \cV(\Sig)$ is isomorphic to the boundary of the quotient polytope $P / V \subset \fR^{n-m}$ with $V := \spn \{v_0,\ldots,v_m\}$. This yields condition (b).\\

Let us now assume that $P$ is a polytope satisfying conditions (a) and (b). Since boundary complexes of polytopes are pure we know by (b) that every maximal face $S$ of $\cF(P) \bs \cV(\Sig)$ has dimension $n - m - 1$. Every facet $F$ of $P$ with $F \bs \cV(\Sig) = S$ has dimension $n - 1$ by definition of $P$. Since eliminating a vertex from a facet (resp. more general: a face) lowers the dimension at most by 1, every facet of $P$ needs to contain $m$ affine independent vertices out of the set $\cV(\Sig)$.\\
But, condition (a) yields that no facet may contain all $m+1$ vertices from $\cV(\Sig)$, since otherwise for the appropriate bounding hyperplane $H$ all vertices of $\cV(P) \bs \cV(\Sig)$ would be located in $H^+$. Therefore, the vertices of $\Sig$ are affine independent, i.e. $\Sig$ is a simplex and every facet of $P$ contains exactly $m$ vertices of $\Sig$. This is exactly the definition of special simplex.
\end{proof}

If $P$ is a simplex itself it makes sense to say that $P$ has a special simplex (which is $P$ itself) as well by Definition \ref{DefSpezSimpl} as by the upper proposition since condition (a) and (b) are satisfied trivially. For the rest of this article we will not mention this special case anymore (the reader might just keep it in mind).

\begin{rem}
Note the following subtle fact in Proposition \ref{SatzAequivSpezSimp}: Although the polytopal complex $\cF(P) \bs \cV(\Sig)$ falls together with the boundary of an $(n-m)$--polytope $Q$, this does \textbf{not} in general imply that the affine basis space of this complex has the same dimension as $Q$. The dimension of the complex might be higher; the upper conclusion just applies to the combinatorial structure of  $\cF(P) \bs \cV(\Sig)$ and $\cF(\partial Q)$.
\label{RemPolymayproofild}
\end{rem}

We give an example to make this abstract remark clearer.
\begin{exa}
By Example \ref{ExaSpezSimp1}, Fig. \ref{AbbSpezielleSimplizesLowDim} a 3--cube contains a special 1--simplex. By eliminating the two vertices belonging to the simplex, one can see that the remaining complex is not contained in $\fR^2$, although it is the boundary complex of a 6--gon.
\label{ExaWuerfelistwild}
\end{exa}

This observation motivates the distinction between \textit{meek} and \textit{wild} polytopes with special simplices introduced in Definition \ref{DefZahmWild} in the introduction.\\

Clearly a polytope $P$ with special simplex $\Sig$ and basis polytope $Q$ is meek if and only if the intersection of the affine basis spaces of $\Sig$ and $Q$ is a single point. Furthermore, all polytopes $P$ with special simplex whose basis polytope $Q$ is a simplex are meek.

\section{Pyramids and direct sums}
\label{AbsPyrundMuscheln}
Before we use Proposition \ref{SatzAequivSpezSimp} to show how to construct polytopes out of $\PM_{(n,m)}$ (with $n,m$ arbitrary) we first deduce two corollaries to show that $\PM_{(*,1)}$ consists of bipyramids and to classify the low dimensional cases $\PM_{(2,1)}, \PM_{(3,2)}$ and $\PM_{(3,1)}$. Afterwards we show that constructing the pyramid of a meek $n$--polytope with special $m$--simplex yields a meek $(n+1)$--polytope with special $(m+1)$--simplex. Unfortunately one can not construct all polytopes in $\PM_{(n,m)}$ this way. Therefore we have to look on direct sums additionally. Both together deliver the whole class $\PM_{(n,m)}$ which we show in the next section. Notice that all polytopes with special simplex we look at until Section \ref{SubSecWildePolytope} are \textbf{meek}.

\begin{cor}
Let $P$ be a meek $n$--polytope with special simplex $\Sig$. 
If $\dim(\Sig) = 1$ then $P$ is a bipyramid, i.e. there is a $(n-1)$--basis polytope $Q$ with $f$--vector $(1,f_{q_0},\ldots,f_{q_{n-2}},1)$ such that we have for the $f$--vector $f_P := (1,f_{p_0},\ldots,f_{p_{n-1}},1)$ of $P$:
\begin{eqnarray*}
	f_{p_0} \, = \, f_{q_0} + 2, \ f_{p_{n-1}} \, = \, 2 \cdot f_{q_{n-2}}, \ \forall i \in \{1,\ldots,n-2\}: f_{p_i} \, = \, f_{q_i} + 2 \cdot f_{q_{i-1}}.	
\end{eqnarray*}
In other words:
\begin{eqnarray*}
	\PM_{(n,1)}	& =	& \{\bpyr Q \ | \ Q \text{ is basis polytope, } \dim(Q) = n-1\} \quad \text{ for all } n \in \fN.
\end{eqnarray*}
\label{KorSpezSimp1}
\end{cor}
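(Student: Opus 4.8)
The plan is to apply Proposition \ref{SatzAequivSpezSimp} in the special case $m = 1$ and show that it forces $P$ to be a bipyramid over its basis polytope $Q$; then the $f$-vector identities follow from the known combinatorics of bipyramids. First I would note that by Proposition \ref{SatzAequivSpezSimp}(b) the complex $\cF(P) \bs \cV(\Sig)$ equals $\cF(\partial Q)$ for an $(n-1)$-polytope $Q$, and since $P$ is \emph{meek} (Definition \ref{DefZahmWild}(a)) the affine basis space of this complex has dimension $n-1 = \dim Q$; equivalently, the affine basis spaces of $\Sig$ and $Q$ meet in a single interior point. Writing $\cV(\Sig) = \{v_0,v_1\}$, the strategy is to show the line segment $\Sig = \conv\{v_0,v_1\}$ pierces the relative interior of $Q$, so that $P = \conv(\cV(Q) \cup \{v_0, v_1\})$ is geometrically the bipyramid $\bpyr Q$ with apexes $v_0, v_1$.

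The key steps, in order: (1) By Proposition \ref{SatzAequivSpezSimp}(a), every hyperplane $H$ containing $\Sig$ has a vertex of $Q$ strictly on each side — in particular the (unique, since we are in the meek case) hyperplane $H_Q = \affspn Q$ containing the interior intersection point does \emph{not} contain $v_0$ or $v_1$; combined with meekness this places $v_0$ and $v_1$ on opposite sides of $H_Q$, because if they were on the same side, that side's open halfspace would contain all of $\Sig \setminus H_Q$ and, together with condition (a) applied to suitable rotations of $H_Q$ about $\affspn(\Sig \cap H_Q)$, one derives a contradiction — more cleanly, meekness says $\Sig \cap H_Q$ is a single point in the relative interior of $P$, and since $\Sig$ is one-dimensional this point lies strictly between $v_0$ and $v_1$. (2) Every facet $F$ of $P$ contains exactly one of $v_0, v_1$ (special simplex) and $F \setminus \cV(\Sig)$ is a facet of $Q$; conversely each facet $G$ of $Q$ gives rise to exactly two facets $\conv(G \cup \{v_0\})$ and $\conv(G \cup \{v_1\})$ of $P$, since $v_0, v_1$ lie on opposite sides of $\affspn G$ (as $G$ spans a hyperplane through the interior point of $\Sig$). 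This identifies the face lattice of $P$ with that of $\bpyr Q$. (3) Read off the $f$-vector: a face of $\bpyr Q$ is either a face of $Q$ not containing an apex, or the join of a proper face of $Q$ with one apex, or the join with both apexes (which can only happen for $\emptyset$, giving the edge $\Sig$). Counting $i$-faces: the empty-containing-no-apex count and the two apex-cone counts yield $f_{p_i} = f_{q_i} + 2 f_{q_{i-1}}$ for $1 \le i \le n-2$, with boundary cases $f_{p_0} = f_{q_0} + 2$ (the two new apexes) and $f_{p_{n-1}} = 2 f_{q_{n-2}}$ (every facet of $P$ is a cone from one apex over a facet of $Q$, and the "cone from both apexes over a facet of $Q$" never occurs in dimension $n-1$). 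The final set-equality $\PM_{(n,1)} = \{\bpyr Q\}$ follows by combining this with Theorem \ref{ThmKlassSpezSimpRough} (which gives $i + j = 1$, forcing $i = 0, j = 1$, so $P = \pyr_0(\Sig_1 \oplus Q) = \Sig_1 \oplus Q = \bpyr Q$), or directly from steps (1)–(2).

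The main obstacle I expect is step (1): rigorously converting the meekness hypothesis plus condition (a) into the statement that $v_0$ and $v_1$ sit on opposite sides of the hyperplane spanned by $Q$. One must be careful that meekness is exactly the condition $\dim \cA = \dim Q$ where $\cA$ is the affine basis space of $\cF(P)\setminus\cV(\Sig)$, and translate this into "$\affspn\Sig$ meets $\affspn Q$ in one point interior to $P$"; the remark following Proposition \ref{SatzAequivSpezSimp} flags precisely this subtlety. Once the piercing picture is established, steps (2) and (3) are routine bookkeeping with the face lattice of a bipyramid. Alternatively, if one is willing to invoke Theorem \ref{ThmKlassSpezSimpRough}, the whole geometric argument collapses to the arithmetic fact $i+j=1 \Rightarrow (i,j)=(0,1)$ and a computation of the $f$-vector of $\Sig_1 \oplus Q$ via the polytopal join formula $\cF(\partial(\Sig_1 \oplus Q)) = \cF(\partial \Sig_1) * \cF(\partial Q)$, where $\cF(\partial \Sig_1)$ consists of two points; I would present the self-contained argument as the main line and mention this shortcut.
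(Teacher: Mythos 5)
Your proposal follows essentially the same route as the paper: invoke Proposition \ref{SatzAequivSpezSimp} with $m=1$, use meekness to place $\cF(P)\bs\cV(\Sig)=\cF(\partial Q)$ inside the single hyperplane $H_Q$ and force $v_0,v_1$ onto opposite sides of it, conclude $P=\bpyr Q$, and read off the $f$--vector (the paper's own proof is just a terser version of your steps (1)--(2), leaving the counting implicit). One small slip: the parenthetical claim that the join of $\emptyset$ with both apexes gives ``the edge $\Sig$'' as a face of $\bpyr Q$ is false --- $\Sig$ passes through the interior of $Q$ and is never a face (as the paper notes, a special simplex cannot lie in the boundary) --- but since your counting formulas correctly omit it, this does not affect the result.
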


\begin{proof}
Let $Q$ with $\dim(Q) = n-1$ be chosen arbitrary, $\cV(\Sig) = \{v_0,v_1\}$. Let $H_Q$ be the hyperplane containing $Q$. To achieve condition (a) in Proposition \ref{SatzAequivSpezSimp} for every hyperplane containing $\cV(\Sig)$, one vertex has to be in $H_Q^+$ and one in $H_Q^-$ and the vertices may not be connected (besides $Q$) by an edge. This corresponds exactly to a bipyramid with basis polytope $Q$ since $\cF(P) \bs \cV(\Sig)$ lies in $H_Q$ because $P$ is meek.
\end{proof}

We may now classify all meek polytopes with special simplices in low dimensions.
\begin{cor}
In $\fR^2$ exactly all convex quadrangles have a special 1--simplex. In $\fR^3$ exactly the polytopes with $f$--vector $(1,5,9,6,1)$ or $(1,5,8,5,1)$ have a special 2--simplex and all arbitrary bipyramids are meek and have a special 1--simplex, i.e.:
\begin{eqnarray*}
  \PM_{(2,1)} & = & \{P \ | \ f_P = (1,4,4,1)\}, \\
  \PM_{(3,2)} & = & \{P \ | \ f_P = (1,5,9,6,1) \vee f_P = (1,5,8,5,1)\}, \\
  \PM_{(3,1)} & = & \{P \ | \ f_P = (1,k+2,3 \cdot k,2 \cdot k,1), k \in \fN_{\geq 3}\}.
\end{eqnarray*}
\label{KorSpezSimp2}
\end{cor}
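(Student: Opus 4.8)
The plan is to derive all three statements from Theorem \ref{ThmKlassSpezSimpRough} together with Corollary \ref{KorSpezSimp1}, simply by enumerating the possible basis polytopes in each low dimension and computing the resulting $f$--vectors. Throughout, recall that a meek $n$--polytope with special $m$--simplex is of the form $\py{i}(\Sig_j \oplus Q)$ with $i+j = m$, $j \geq 1$, and $\dim Q = n-m$; since we want $\dim Q = n - m$ to make sense as a genuine polytope we need $n - m \geq 0$, and the interesting cases have $n-m \geq 1$.

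For $\PM_{(2,1)}$: here $n = 2$, $m = 1$, so $i = 0$, $j = 1$, and $Q$ is a $1$--dimensional polytope, i.e.\ a segment, with $f$--vector $(1,2,1)$. Then $P = \Sig_1 \oplus Q$ is the direct sum of two segments, which is a quadrilateral; alternatively one invokes Corollary \ref{KorSpezSimp1} directly: $P$ is a bipyramid over a segment, i.e.\ a quadrilateral, with $f$--vector $(1,4,4,1)$. Conversely every convex quadrilateral is combinatorially a bipyramid over a segment, hence lies in $\PM_{(2,1)}$, and every polytope in $\PM_{(2,1)}$ has $f$--vector $(1,4,4,1)$, which is exactly the $f$--vector of a convex quadrangle. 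I would also note in passing that a triangle has its special $0$--simplex being all of itself but that is the degenerate case excluded from the discussion, so ``quadrangles'' is the honest answer.

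For $\PM_{(3,1)}$: apply Corollary \ref{KorSpezSimp1} with $n = 3$. The basis polytope $Q$ is a $2$--polytope, i.e.\ a convex $k$--gon with $k \geq 3$, whose $f$--vector is $(1,k,k,1)$. The bipyramid formulas give $f_{p_0} = k + 2$, $f_{p_2} = 2k$, and $f_{p_1} = f_{q_1} + 2 f_{q_0} = k + 2k = 3k$, so $f_P = (1, k+2, 3k, 2k, 1)$ with $k \geq 3$. Conversely, given such an $f$--vector, one recovers $k$ and the bipyramid over a $k$--gon realizes it; this gives the stated equality for $\PM_{(3,1)}$.

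For $\PM_{(3,2)}$: now $n = 3$, $m = 2$, $\dim Q = 1$, so $Q$ is a segment with $f$--vector $(1,2,1)$, and $(i,j)$ ranges over $(0,2)$ and $(1,1)$. When $(i,j) = (0,2)$, $P = \Sig_2 \oplus Q$ is the direct sum of a triangle and a segment; its boundary complex is the polytopal join $\cF(\partial \Sig_2) * \cF(\partial Q)$, so one computes its $f$--vector from the join formula (or recognizes it as a ``triangular bipyramid'' type object), obtaining $(1,5,9,6,1)$. When $(i,j) = (1,1)$, $P = \py{1}(\Sig_1 \oplus Q) = \py{1}(\text{quadrilateral})$ is a pyramid over a quadrilateral (a square pyramid), with $f$--vector $(1,5,8,5,1)$. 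Conversely any polytope with one of these two $f$--vectors is combinatorially forced to be one of these two polytopes, so it lies in $\PM_{(3,2)}$. The only mildly delicate point — the main obstacle, such as it is — is doing the two join/pyramid $f$--vector computations correctly and being careful that the $(i,j) = (0,2)$ and $(i,j) = (1,1)$ cases are genuinely non-isomorphic (which is visible already from the distinct $f$--vectors) and that no further basis polytope is available in these tiny dimensions; the rest is bookkeeping with the known $f$--vector formulas for pyramids (which add a new vertex joined to everything, shifting the $f$--vector as $f^{(i)}_{\py{} P} = f^{(i)}_P + f^{(i-1)}_P$) and direct sums.
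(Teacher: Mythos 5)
Your computations are all correct and your treatment of $\PM_{(2,1)}$ and $\PM_{(3,1)}$ is exactly the paper's: both are read off from Corollary \ref{KorSpezSimp1} by plugging in the $f$--vector of a segment resp.\ a $k$--gon. Where you diverge is $\PM_{(3,2)}$. You enumerate the pairs $(i,j) \in \{(0,2),(1,1)\}$ allowed by the classification Theorem \ref{ThmKlassSpezSimpRough} and compute the two $f$--vectors from the join and pyramid formulas. The paper instead argues directly from Proposition \ref{SatzAequivSpezSimp}: since $\dim(Q)=1$ the complex $\cF(P)\bs\cV(\Sig)$ is just two points, and the case split comes from where the point $Q\cap\Sig$ sits inside the triangle $\Sig$ --- in the interior (giving the bipyramid over a triangle, $f$--vector $(1,5,9,6,1)$) or in the relative interior of an edge (giving the pyramid over a quadrilateral, $(1,5,8,5,1)$); a vertex is excluded because that vertex of $\Sig$ would cease to be a vertex of $P$, and an empty intersection is excluded because $\Sig$ would then lie in the boundary. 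Your route is cleaner but note that Corollary \ref{KorSpezSimp2} appears in Section \ref{AbsPyrundMuscheln}, \emph{before} Theorem \ref{ThmKlassSpezSimp} is proved in Section \ref{AbsThmKlassSpezSimp}; since that theorem's proof does not use this corollary there is no circularity, but the paper's argument is self--contained at that stage of the development, whereas yours requires a forward reference. One further point, which affects the paper's own proof as much as yours: your converse for $\PM_{(3,1)}$ only shows that each $f$--vector $(1,k+2,3k,2k,1)$ is \emph{realized} by a bipyramid, not that every $3$--polytope with that $f$--vector is one (for $k=4$ the stacked polytope obtained from a triangular bipyramid has $f$--vector $(1,6,12,8,1)$ but no special $1$--simplex), so the displayed set equality should really be read as a statement about realizable $f$--vectors; for $\PM_{(2,1)}$ and $\PM_{(3,2)}$ your appeal to the uniqueness of combinatorial types with $4$ resp.\ $5$ vertices does close the converse correctly.
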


\begin{proof}
The assertions on $\PM_{(2,1)}$ and $\PM_{(3,1)}$ may be deduced from the previous corollary by a simple calculation.

For $\PM_{(3,2)}$ the conditions in Proposition \ref{SatzAequivSpezSimp} yield that there is exactly one hyperplane containing $\Sig$ and $\cF(\partial Q) = \cF(P) \bs \cV(\Sig)$ consists of merely two vertices since $\dim(Q) = 1$. Thus, the intersection of $Q$ and $\Sig$ is either in the interior of $\Sig$ or on a 1--face of $\Sig$. These are the two cases mentioned above. The intersection may not be a vertex since this vertex would no longer be part of $\cV(P)$ and the intersection may not be empty since $\Sig$ would not be a special simplex otherwise.
\end{proof}

Note the following further fact Proposition \ref{SatzAequivSpezSimp} yields: For all $n \in \fN, m \in \fN_{<n}$ there exists a canonical embedding $\rho: \PM_{(n,m)} \ra \PM_{(n+1,m+1)}$ which is induced by a pyramid construction. This leads to the first way to construct a meek polytope with special simplex.
\begin{cor}
 Let be $P$ a meek $n$--polytope ($\cV(P) := \{v_0,\ldots,v_p\}$) with special $m$--simplex. Then its pyramid $\pyr P \subset \fR^{n+1}$ has a special $m+1$--simplex and is meek.
\label{KorSpezSimp3}
\end{cor}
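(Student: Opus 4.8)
The plan is to verify conditions (a) and (b) of Proposition \ref{SatzAequivSpezSimp} for $\pyr P$ with the evident candidate for the special simplex, namely $\pyr \Sig$. Write $\pyr P = \conv(P \cup \{w\})$ where $w \in \fR^{n+1}$ is the apex and $P$ is embedded in the hyperplane $\fR^n \times \{0\}$. Since $\Sig$ is a special $m$--simplex in $P$ with $\cV(\Sig) = \{v_0,\dots,v_m\}$, the polytope $\Sig' := \conv(\Sig \cup \{w\})$ is an $(m+1)$--simplex (the apex $w$ is affinely independent of the hyperplane $\fR^n \times \{0\}$), and it is the natural candidate for a special simplex in $\pyr P$. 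The faces of $\pyr P$ are exactly the faces of $P$ together with the pyramids $\conv(G \cup \{w\})$ over faces $G$ of $P$ (including $G = \emptyset$); in particular the facets of $\pyr P$ are $P$ itself and the pyramids $\conv(F \cup \{w\})$ over facets $F$ of $P$. So I would first check the special-simplex property directly from Definition \ref{DefSpezSimpl}: the facet $P$ contains $\Sig$, hence all of $\cV(\Sig)$ but not $w$, i.e.\ all but exactly one vertex of $\Sig'$; and a facet $\conv(F \cup \{w\})$ contains $w$ and, since $F$ is a facet of $P$, exactly $m$ vertices of $\Sig$, hence exactly $m+1 = (m+1)$ of the $m+2$ vertices of $\Sig'$, again all but exactly one. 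This already shows $\pyr P \in \PS_{(n+1,m+1)}$.

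Next I would identify the basis polytope of $\pyr P$ and check meekness. The affine span of $\Sig'$ is the affine span of $\Sig$ joined with $w$; the parallel linear subspace $V'$ is $V \oplus \fR\cdot e_{n+1}$ where $V = \spn\{v_1-v_0,\dots,v_m-v_0\}$ (taking $v_0$ as origin as in the proof of Proposition \ref{SatzAequivSpezSimp}) and $e_{n+1}$ spans the apex direction. The projection $\fR^{n+1} \to \fR^{n+1}/V' = \fR^{(n+1)-(m+1)} = \fR^{n-m}$ factors through the projection $\fR^n \to \fR^n/V = \fR^{n-m}$ defining the basis polytope $Q$ of $P$ (it simply kills the extra coordinate $e_{n+1}$), so the image of $\pyr P$ is the same as the image of $P$, namely $Q$. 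Thus $\pyr P$ has the same basis polytope $Q$ as $P$. Meekness of $\pyr P$ then follows from meekness of $P$: by the criterion stated just before Section \ref{AbsPyrundMuscheln}, $P$ meek means the affine basis space of $\Sig$ meets that of $Q$ in a single point; lifting to $\fR^{n+1}$, the apex $w$ lies outside $\fR^n$, so the affine span of $\Sig'$ still meets the affine span of $Q$ (which lies in $\fR^n \times \{0\}$) in exactly that same single point, hence $\dim \cA' = \dim Q$ where $\cA'$ is the affine basis space of $\cF(\pyr P)\bs\cV(\Sig')$. Equivalently one checks condition (b) of Proposition \ref{SatzAequivSpezSimp}: since $w$ is a cone point, $\cF(\pyr P) \bs \cV(\Sig') = \cF(\pyr P) \bs (\cV(\Sig) \cup \{w\})$, and removing $w$ from the face complex of a pyramid over $P$ returns $\cF(P)$, so this equals $\cF(P) \bs \cV(\Sig) = \cF(\partial Q)$ by (b) for $P$. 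That gives both the basis-polytope claim and, combined with the dimension count, meekness.

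For completeness I would also verify condition (a) of Proposition \ref{SatzAequivSpezSimp} for $\pyr P$ to round out the argument (although (a) already follows once we know $\pyr P$ is a polytope with special simplex by the Definition-side check above). Let $H'$ be a hyperplane of $\fR^{n+1}$ with $\Sig' \subset H'$. If all vertices in $\cV(\pyr P) \bs \cV(\Sig')$ lay in one closed halfspace $H'^+$, then $\pyr P \subset H'^+$ with $\Sig' \subset H' = \partial H'^+$, forcing $\Sig'$ into a facet of $\pyr P$, contradicting $\pyr P \in \PS_{(n+1,m+1)}$ (or, one may argue as in the proof of Proposition \ref{SatzAequivSpezSimp}: restricting $H'$ to $\fR^n \times\{0\}$ gives a $\Sig$-containing hyperplane $H$ of $\fR^n$, and by (a) for $P$ there are vertices of $P$ strictly on both sides of $H$, which remain strictly on both sides of $H'$). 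I do not expect any single step to be a genuine obstacle here: the whole proof is a matter of bookkeeping the face structure of a pyramid and tracking how the defining projection of the basis polytope extends. The one point that deserves a careful sentence rather than a wave of the hand is the claim $\dim\cA' = \dim Q$, i.e.\ that adjoining the apex to $\Sig$ does not increase the dimension defect between the special simplex and the basis polytope --- this is exactly where meekness of $P$ is used and where the condition ``$\dim(\Sig) = m$'' with $w \notin \affspn(P)$ matters.
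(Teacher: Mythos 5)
Your proof is correct and takes essentially the same route as the paper: the same candidate simplex $\Sig' = \conv(\cV(\Sig) \cup \{w\})$, and your condition-(a) argument (restricting a $\Sig'$-containing hyperplane $H'$ of $\fR^{n+1}$ to the $\Sig$-containing hyperplane $H = H' \cap \fR^n$ and noting that vertices of $P$ strictly on either side of $H$ stay strictly on either side of $H'$) is exactly the paper's. You are in fact more thorough than the paper, whose proof dismisses condition (b) as trivial and never explicitly verifies meekness of $\pyr P$ or identifies its basis polytope; your direct facet count from Definition \ref{DefSpezSimpl} and the dimension bookkeeping for the affine span of $\cF(\pyr P)\bs\cV(\Sig')$ supply exactly the details the paper omits.
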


\begin{proof}
Let $\cV(\Sig) := \{v_0,\ldots,v_n\}$ and let $w$ be the vertex which is added as the apex of the pyramid of $P$, i.e.:
\begin{eqnarray*}
	\text{pyr } P	& =	& \lf(\conv \cV(P) \cup \{w\}\ri).
\end{eqnarray*}
Then $\Sig' := \conv \lf(\cV(\Sig) \cup \{w\}\ri)$ is a special simplex in pyr $P$: since if $H'$ is a hyperplane in $\fR^{n+1}$ with $\Sig' \subset H'$ then there exists one and only one hyperplane $H$ of $\fR^n$ with $H = H' \cap \fR^n$ and $\Sig \subset H$. Since $\Sig$ is a special simplex of $P$, condition (a) of Proposition \ref{SatzAequivSpezSimp} yields:
\begin{eqnarray*}
	\exists i,j \in \{m+1,\ldots,p\}: v_i \in H^+ \w v_j \in H^-
\end{eqnarray*}
and thus, due to the fact that $\{v_{m+1},\ldots,v_p\} \subset \fR^n$, by construction of $H'$:
\begin{eqnarray*}
	\exists i,j \in \{m+1,\ldots,p\}: v_i \in H'^+ \w v_j \in H'^-.
\end{eqnarray*}
Therefore condition (a) of Proposition \ref{SatzAequivSpezSimp} holds for pyr $P$ and $\Sig'$. Condition (b) is trivial. Hence $\Sig'$ is special $m+1$--simplex of pyr $P$.
\end{proof}

The two corollaries \ref{KorSpezSimp1} and \ref{KorSpezSimp3} put together provide the simple first method to construct an $n$--dimensional polytope with special $m$--simplex: 

\begin{cor}
Let $Q$ be an arbitrary polytope with dimension $n - m$. Then
\begin{eqnarray*}
	\py{m-1} (\bpyr Q)	& :=	& \underbrace{\pyr \pyr \cdots \pyr }_{m-1 \text{ times}} \bpyr Q
\end{eqnarray*}
is a meek $n$--polytope with special $m$--simplex.
\label{KorSpezSimp4}
\end{cor}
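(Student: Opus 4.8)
The plan is a direct induction on $m$, stacking Corollary \ref{KorSpezSimp1} as the base case and Corollary \ref{KorSpezSimp3} as the inductive step. Set $d := n - m = \dim Q$. First I would record that a special $m$--simplex forces $m \geq 1$ (Definition \ref{DefSpezSimpl}), so the statement is only claimed for $m \geq 1$; in the boundary case $m = 1$ the expression $\py{m-1}(\bpyr Q)$ is just $\py{0}(\bpyr Q) = \bpyr Q$, and Corollary \ref{KorSpezSimp1} already asserts that this is a meek $(d+1) = n$--polytope with special $1$--simplex and basis polytope $Q$. This is the base case (the further degenerate situation $n = m$, where $Q$ is a point and $\bpyr Q$ a segment, is the ``$P$ is a simplex'' case discussed after Proposition \ref{SatzAequivSpezSimp}, and is covered by convention).

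For the inductive step I would assume that for some $k$ with $1 \leq k < m$ the polytope $P_k := \py{k-1}(\bpyr Q)$ is a meek $(d+k)$--polytope carrying a special $k$--simplex $\Sig$ with basis polytope $Q$. Then $P_{k+1} = \pyr P_k$ is a polytope of dimension $(d+k)+1$, and Corollary \ref{KorSpezSimp3} applies verbatim to $P_k$, yielding that $\pyr P_k$ is meek and has a special $(k+1)$--simplex $\Sig'$. I would additionally note that the basis polytope is unchanged: in the proof of Corollary \ref{KorSpezSimp3} the new apex $w$ is adjoined to the special simplex (forming $\Sig'$), hence $\cF(\pyr P_k) \bs \cV(\Sig') = \cF(P_k) \bs \cV(\Sig) = \cF(\partial Q)$. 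Thus the induction carries the triple ``meek; special simplex of dimension one higher; same basis polytope $Q$'' forward from $k$ to $k+1$.

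Evaluating the induction at $k = m$ then gives exactly the assertion: $\py{m-1}(\bpyr Q)$ is meek, has dimension $d + m = n$, and has a special $m$--simplex. I do not expect any genuine obstacle here; the only point requiring a moment's care is the index bookkeeping — one must check that starting from $\bpyr Q$ (ambient dimension $n-m+1$, special simplex dimension $1$) and applying $m-1$ pyramids lands the ambient dimension on $n$ and the special--simplex dimension on $m$ simultaneously, which holds because each pyramid step increments both by one.
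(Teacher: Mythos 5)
Your proposal is correct and is exactly the argument the paper intends: the paper gives no separate proof of this corollary, stating only that Corollaries \ref{KorSpezSimp1} and \ref{KorSpezSimp3} ``put together'' yield it, which is precisely your induction (base case the bipyramid, inductive step the pyramid construction). Your explicit bookkeeping of the dimensions and of the unchanged basis polytope is a welcome elaboration but not a departure from the paper's route.
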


We will always assume from now on w.l.o.g. that the apex of the pyramid $\pyr P$ has the coordinates $(0,\ldots,0,1)^T$.\\

Unfortunately, in the class of meek polytopes with special simplices one cannot construct $\PM$ in total just by constructing pyramids of bipyramids of arbitrary polytopes. The reason is that in an $n$--pyramid with special $m$--simplex $\Sig$ there has to be an $(m-1)$--face $F$ of $\Sig$ which is contained in the base of the pyramid but not in the boundary of the base of the pyramid. Thus, $F$ may not be an element of the face lattice of the pyramid (note that in low dimensions this may only appear with special 2--simplices since vertices need to be in the face lattice). But there are examples of polytopes with special simplex $\Sig$ which have every element of the face lattice of $\Sig$ in $\cF(P)$: for example the bipyramid of a 2--simplex (this is the case $(1,5,9,6,1)$ in Corollary \ref{KorSpezSimp2} which was already introduced in Example \ref{ExaSpezSimp1}; the 2--simplex is the interesting one here) or the Birkhoff polytope $P_3$ of all doubly stochastic $3 \times 3$ matrices --- a 4--dimensional polytope with special 2--simplex whose face lattice is completely contained in the face complex of $P_3$.

Hence there has to be another method to create polytopes with special simplex which avoids the problem mentioned above. It turns out that this method is the direct sum:

\begin{prop}
The direct sum $\Sig \oplus Q$ of an $k$--simplex $\Sig$ and an $m$--polytope $Q$ is a meek $(m+k)$--polytope and always contains $\Sig$ as a special $k$--simplex. Furthermore
\begin{eqnarray*}
	\forall F \in \cF(\partial \Sig) : F \in \cF(\Sig \oplus Q),
\end{eqnarray*}
holds, i.e. all elements of the boundary complex of $\Sig$ are contained in the face complex of the direct sum $\Sig \oplus Q$.
\label{SadefShell}
\end{prop}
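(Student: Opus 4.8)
The plan is to reduce everything to the description of the boundary complex of a direct sum as a polytopal join, $\cF(\partial(\Sig \oplus Q)) = \cF(\partial \Sig) * \cF(\partial Q)$, which was established in Section~\ref{SecGrundlagen}. First I would fix coordinates: after translating, place $\Sig$ in $\fR^k$ and $Q$ in $\fR^m$ so that the origin lies in the interior of each; then $\Sig \oplus Q \subset \fR^{k+m}$ is a well defined $(k+m)$--polytope in which $\Sig \times \{0\}$ and $\{0\} \times Q$ are the two coordinate copies. By the join description, every facet of $\Sig \oplus Q$ has the form $\conv(F_1 \cup F_2)$ with $F_1$ a facet of $\Sig$ and $F_2$ a facet of $Q$, and more generally every proper face is $\conv(G_1 \cup G_2)$ with $G_1 \in \cF(\partial \Sig) \cup \{\emptyset\}$, $G_2 \in \cF(\partial Q) \cup \{\emptyset\}$, not both empty. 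In particular the $k+1$ vertices of $\Sig$ are vertices of $\Sig \oplus Q$, and since $\cV(\Sig)$ and $\cV(Q)$ are disjoint, a face $\conv(G_1 \cup G_2)$ of $\Sig \oplus Q$ contains a vertex of $\Sig$ if and only if $G_1 \neq \emptyset$.

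Next I would check that $\Sig$ is a special $k$--simplex: it is a simplex spanned by $k+1$ affinely independent vertices of $\Sig \oplus Q$ (those of $\Sig$ itself), and any facet $\conv(F_1 \cup F_2)$ contains exactly the $k$ vertices of the facet $F_1$ of $\Sig$ and no vertex of $Q$, hence exactly $k$ vertices of $\Sig$. This is precisely Definition~\ref{DefSpezSimpl}, so $\Sig \oplus Q \in \PS_{(m+k,k)}$. The ``Furthermore'' clause is then immediate from the same join description: every proper face $F$ of $\Sig$ equals $\conv(F \cup \emptyset)$ in $\cF(\partial \Sig) * \cF(\partial Q) = \cF(\partial(\Sig \oplus Q))$, hence $F \in \cF(\Sig \oplus Q)$.

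For meekness I would argue as follows. By the dichotomy above, the subcomplex $\cF(\Sig \oplus Q) \bs \cV(\Sig)$ consists exactly of the faces $\conv(\emptyset \cup G_2) = G_2$ with $G_2 \in \cF(\partial Q)$; that is, it is the copy $\{0\} \times \cF(\partial Q)$, whose affine basis space $\cA$ is $\{0\} \times \fR^m$, of dimension $m$ (using that the origin is interior to $Q$). On the other hand, the basis polytope from Definition~\ref{DefBasispolytop} is the image of $\Sig \oplus Q$ under the projection of $\fR^{k+m}$ along the linear subspace $V := \fR^k \times \{0\}$ parallel to $\affspn \Sig$, which is simply $Q \subset \fR^m$; hence $\dim Q = m = \dim \cA$, so $\Sig \oplus Q$ is meek. (Equivalently, $\affspn(\Sig \times \{0\}) \cap \cA = \{0\}$ is a single point, and meekness follows from the criterion recorded after Proposition~\ref{SatzAequivSpezSimp}.)

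I expect the only genuinely delicate point to be the precise face structure of the polytopal join --- namely the claim that a face of $\Sig \oplus Q$ avoids $\cV(\Sig)$ exactly when its $\Sig$--component is empty --- because Definition~\ref{DefSimplVerein} spells out only the facets explicitly and obtains the lower faces ``by intersection''; once this is pinned down, everything else is bookkeeping, including the verification that the origin can indeed be placed in the interior of both factors so that the join formula and the projection description of the basis polytope apply.
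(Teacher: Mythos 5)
Your proposal is correct and follows essentially the same route as the paper: both arguments rest entirely on the identity $\cF(\partial(\Sig \oplus Q)) = \cF(\partial \Sig) * \cF(\partial Q)$ and then read off the special--simplex property from the fact that each facet is $\conv(\cV(F_1)\cup\cV(F_2))$ with $F_1$ a facet of $\Sig$, meekness from the fact that the faces avoiding $\cV(\Sig)$ lie in the affine span of $Q$, and the ``furthermore'' clause from the presence of $\cF(\partial\Sig)$ in the join. (Only a wording slip: a facet $\conv(F_1\cup F_2)$ of course \emph{does} contain the vertices of $F_2\subseteq Q$; what you mean, and what the argument needs, is that its vertices of $\Sig$ are exactly those of $F_1$.)
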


\begin{proof}
Recall that $\cF(\partial (\Sig \oplus Q)) = \cF(\partial \Sig) * \cF(\partial Q)$. By definition of the polytopal join every face of $\cF(\partial \Sig) * \cF(\partial Q)$ is the convex hull of a facet $F_1$ of $\cF(\partial Q)$ and a facet $F_2$ of $\cF(\partial \Sig)$. Since the affine basis spaces of $Q$ and $\Sig$ intersect in only one point located in the interior of both complexes all vertices of both facets have to be contained in $\conv(\cV(F_1) \cup \cV(F_2))$. Thus, $\Sig \oplus Q$ contains a special simplex since $\Sig$ is simplex and hence every facet of $\Sig$ contains all but one vertex of $\Sig$. 

By definition of the direct sum it follows that $\cF(P) \bs \cV(\Sig)$ is contained in the affine basis space of $Q$ and therefore that $\Sig \oplus Q$ is meek. 

By definition of the polytopal join the convex hull of every pair of a facet of $Q$ and a facet of $\Sig$ is a facet of $\Sig \oplus Q$. This implies in particular that every facet and hence every face of $\Sig$ is contained in the boundary of $\Sig \oplus Q$.
\end{proof}

Observe that every pyramid of a polytope $P$ with special $m$--simplex $\Sig_m$ and basis polytope $Q$ may easily be transformed into $\Sig_m \oplus Q$ geometricly. One just has to push $\Sig_m$ out of the pyramids base into the new dimension emerged during pyramid construction in the opposite direction of the apex of the pyramid. 

Observe, on the other way around, that one may transform $\Sig_k \oplus Q$ into $\py{j}(\Sig_{k-j} \oplus Q)$ by intersecting $\Sig_k \oplus Q$ with $j$ hyperplanes such that every hyperplane is parallel to a facet $F$ of $\Sig_k$, contains $Q$ and is oriented such that $F$ is in the negative halfspace. Notice that a hyperplane is determined that way since, if $\dim(\Sig_k \oplus Q) = n$, then it has to contain the intersection point of $Q$ and $\Sig_k$, the basis space of $Q$ has dimension $n - k$ and the facet, it has to be parallel to, has dimension $k - 1$.\\

The next step is to explain the combinatorical structure (i.e. here: the $f$--vector) of $\Sig_k \oplus Q$ for an arbitrary polytope $Q$. But this is easy due to $\cF(\partial (\Sig_k \oplus Q)) = \cF(\partial Q) * \cF(\partial \Sig_k)$. This means pictorially that (for every $j \in 1 \leq i + \dim(Q)$) the $j$--faces of the face lattice of $\Sig_k \oplus Q$ are all the combinatorially combinable ones from the lattices $\cF(\partial \Sig_k)$ and $\cF(\partial Q)$. This is given more specificly by the following proposition:

\begin{prop}
Let $Q$ be a $q$--polytope with $f$--vector $f_Q :=	(1,f_{Q_0},f_{Q_1},\ldots,f_{Q_{q-1}},1)$. Then the $f$--vector $f_{\Sig_k \oplus Q}$ of $\Sig_k \oplus Q$ is given by
\begin{eqnarray*}
	f_{(\Sig_k \oplus Q)_j}	& =	& \sum_{r,l : r + l = j - 1} f_{Q_r} \cdot \binom{i+1}{l+1} \quad \text{ for all } j \in \{0,\ldots,q+i\}.
\end{eqnarray*}
\label{SatzKombStrukShell}
\end{prop}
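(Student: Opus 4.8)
The plan is to compute $f_{\Sig_k \oplus Q}$ by counting faces in the join description of the boundary complex recorded just before Definition~\ref{DefQuotPolytop}, namely
\begin{eqnarray*}
	\cF(\partial(\Sig_k \oplus Q)) & = & \cF(\partial \Sig_k) * \cF(\partial Q).
\end{eqnarray*}
So everything reduces to understanding the faces of a polytopal join $\tri_1 * \tri_2$ and sorting them by dimension; the binomials will come from the face numbers of the boundary of a simplex and the $f_{Q_r}$ from those of $\partial Q$.

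First I would pin down the face structure of the polytopal join. Building on Definition~\ref{DefSimplVerein} --- whose facets are $\conv\{F_1,F_2\}$ for $F_i$ maximal in $\tri_i$ and whose lower faces are obtained as intersections of these facets --- one checks that the faces of $\tri_1 * \tri_2$ are exactly the polytopes $\conv(G_1 \cup G_2)$ with $G_1$ a face of $\tri_1$ and $G_2$ a face of $\tri_2$, where on each side the empty face is allowed, and that $(G_1,G_2) \mapsto \conv(G_1 \cup G_2)$ is a bijection onto the faces of the join. Because the affine basis spaces of $\tri_1$ and $\tri_2$ meet only in a point interior to both, the vertex sets of $G_1$ and $G_2$ are jointly in general position, so
\begin{eqnarray*}
	\dim \conv(G_1 \cup G_2) & = & \dim G_1 + \dim G_2 + 1,
\end{eqnarray*}
which is just the rule $\dim(\tri_1 * \tri_2) = \dim \tri_1 + \dim \tri_2 + 1$ from after Definition~\ref{DefSimplVerein}, localised to each face.

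Granting this, the count is pure bookkeeping. Put $\tri_1 := \cF(\partial Q)$ and $\tri_2 := \cF(\partial \Sig_k)$. A $j$-face of $\Sig_k \oplus Q$ corresponds to a pair $(G_1,G_2)$ with $\dim G_1 = r$, $\dim G_2 = l$ and $r + l = j-1$. The number of $r$-faces of $\cF(\partial Q)$ is $f_{Q_r}$ for $0 \le r \le q-1$ and is $1$ for $r = -1$ (the empty face), so with the convention $f_{Q_{-1}} := 1$ this is uniformly $f_{Q_r}$. The number of $l$-faces of $\cF(\partial \Sig_k)$ is $\binom{k+1}{l+1}$ --- the number of ways to pick the $l+1$ vertices of an $l$-face among the $k+1$ vertices of $\Sig_k$ --- valid for $-1 \le l \le k-1$, where $\binom{k+1}{0}=1$ at $l=-1$ again counts the empty face and $l=k$ does not occur since $\Sig_k$ itself is not in $\cF(\partial \Sig_k)$. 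Multiplying the two counts and summing over admissible $(r,l)$ with $r+l=j-1$ gives
\begin{eqnarray*}
	f_{(\Sig_k \oplus Q)_j} & = & \sum_{r,l\,:\,r+l=j-1} f_{Q_r}\binom{k+1}{l+1},
\end{eqnarray*}
the claimed formula. As a check at the extremes: for $j=0$ the sum is $f_{Q_{-1}}\binom{k+1}{1} + f_{Q_0}\binom{k+1}{0} = (k+1)+f_{Q_0}$, the $k+1$ vertices of $\Sig_k$ together with the $f_{Q_0}$ vertices of $Q$; for $j=q+k-1$ the only admissible pair is $(q-1,k-1)$, giving $f_{Q_{q-1}}(k+1)$ facets, matching the facet description of the join.

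I expect the only genuine obstacle to be the first step: extracting the full face lattice of a polytopal join from the ``facets plus their intersections'' definition, and in particular verifying that the empty face on each side really is picked up --- this is what produces the shift by one in the dimension formula, hence the binomials $\binom{k+1}{l+1}$ rather than $\binom{k+1}{l}$ --- and that distinct pairs $(G_1,G_2)$ give distinct faces. Once the join is understood face by face, the proposition follows from the elementary count above.
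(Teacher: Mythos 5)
Your proposal is correct and follows essentially the same route as the paper: both reduce to the join description $\cF(\partial(\Sig_k\oplus Q))=\cF(\partial\Sig_k)*\cF(\partial Q)$, identify $j$--faces with pairs $(G_1,G_2)$ of faces (empty face allowed) satisfying $\dim G_1+\dim G_2=j-1$, and count the simplex side by $\binom{k+1}{l+1}$. You are somewhat more careful than the paper about the empty-face convention and the bijectivity of $(G_1,G_2)\mapsto\conv(G_1\cup G_2)$, and you correctly write $\binom{k+1}{l+1}$ where the statement's $\binom{i+1}{l+1}$ is evidently a typo for $k$.
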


\begin{proof}
By the definition of direct sums we know that the intersection of the affine basis spaces of $Q$ and $\Sig_k$ is a single point which is not part of the boundary of $\Sig_k$. Furthermore, we know that all elements of $\cF(\partial \Sig_k)$ are in $\cF(\Sig_k \oplus Q)$. Since $\Sig_k$ is a special simplex, condition (b) of Proposition \ref{SatzAequivSpezSimp} implies that all elements of $\cF(\partial Q)$ are in $\cF(\Sig_k \oplus Q)$ either. Thus, every pair of an $r$--face of $\cF(\partial \Sig_k)$ and an $l$--face of $\cF(\partial Q)$ forms a $r + l + 1$--face of $\cF(\Sig_k \oplus Q)$. Since the number of $l$--faces in an $k$--simplex is $\binom{k+1}{l+1}$ the assertion follows.
\end{proof}

\section{Full classification of meek polytopes with special simplices}
\label{AbsThmKlassSpezSimp}
After discussing the relation between direct sums and pyramids (of bipyramids) of polytopes we finally need to check if there exist additional meek polytopes with special simplices which are not obtained by these two constructions. The following main theorem (which was already given roughly as Theorem \ref{ThmKlassSpezSimpRough} in the introduction) yields that this is not the case and therefore gives a full classification of $\PM_{(n,m)}$ since the $f$--vectors of pyramids and direct sums are known:

\begin{thm}
Let $P$ be an $n$--polytope. Then $P$ is meek and contains a special $m$--simplex $\Sig$ if and only if $P$ has an $(n - m)$--basis polytope $Q$ (cf. Definition \ref{DefBasispolytop}) such that
\begin{eqnarray*}&&
	\begin{array}{cccccccc}
		\exists \, i \in \fN, j \in \fN_{>0} :	& i + j	& =	& m	& \text{ and }	& P 	& =	& \py{i}\lf(\Sig_j \oplus Q\ri);
	\end{array}
\end{eqnarray*}
with $\Sig_j$ denoting a $j$--simplex. In other words:
\begin{eqnarray*}
&& \forall n, m \in \fN: \PM_{(n,m)} =\\
&& \qquad \{P \ | \ \exists \, i, j \in \fN, \exists \, Q : i + j = m \w \dim(Q) = n - m \w P = \py{i}\lf(\Sig_j \oplus Q\ri)\}.
\end{eqnarray*}
\label{ThmKlassSpezSimp}
\end{thm}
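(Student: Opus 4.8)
The statement is an equivalence, and one implication is almost immediate from what has already been established. For the ``if'' direction I would argue that, if $\dim Q = n-m$ and $i+j=m$ with $j>0$, then by Proposition \ref{SadefShell} the polytope $\Sig_j \oplus Q$ is a meek $((n-m)+j)$--polytope carrying $\Sig_j$ as a special $j$--simplex, and applying Corollary \ref{KorSpezSimp3} $i$ times turns $\py{i}(\Sig_j \oplus Q)$ into a meek polytope of dimension $(n-m)+j+i = n$ with a special $(j+i)=m$--simplex. Hence every polytope of the stated shape lies in $\PM_{(n,m)}$, and the content of the theorem is the reverse inclusion.

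So suppose $P$ is a meek $n$--polytope with special $m$--simplex $\Sig$; I would put $V := \affspn(\Sig)$ and let $\cA$ be the affine basis space of $\cF(P)\bs\cV(\Sig)$. Since the projection $\pi\colon\fR^n\to\fR^{n-m}$ defining $Q$ is surjective and $P$ is full--dimensional, $\dim Q = n-m$; meekness gives $\dim\cA = n-m$ and that $V\cap\cA$ is a single point $c$, so the direction spaces of $V$ and $\cA$ are complementary and $\pi$ restricts to an affine isomorphism $\cA\to\fR^{n-m}$. With $\tilde Q := \conv(\cV(P)\bs\cV(\Sig))\subset\cA$, the sets $\cV(\Sig)$ and $\cV(\tilde Q)$ partition $\cV(P)$, so $P = \conv(\Sig\cup\tilde Q)$; moreover $\pi$ carries $\tilde Q$ isomorphically onto $Q$, and $\cF(\partial\tilde Q) = \cF(P)\bs\cV(\Sig)$ by Proposition \ref{SatzAequivSpezSimp}(b). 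Finally, a hyperplane of $\fR^n$ contains $\Sig$ precisely when it has the form $V+L$ for a hyperplane $L$ of $\cA$ through $c$, so condition (a) of Proposition \ref{SatzAequivSpezSimp} (equivalently Lemma \ref{Lem2ReiWel1}) says exactly that $c$ lies in the interior of $\tilde Q$.

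The decisive step is to prove $V\cap P = \Sig$, and hence $c\in\Sig$. Here the special--simplex hypothesis enters: every facet $G$ of $P$ omits exactly one vertex $v_{\tau(G)}$ of $\Sig$, so its supporting hyperplane $H_G$ contains the remaining $m$ (affinely independent) vertices of $\Sig$ but not $v_{\tau(G)}$; therefore $V\not\subseteq H_G$, the trace $H_G\cap V$ is the $(m-1)$--flat $\affspn(\{v_l : l\neq\tau(G)\})$, and $H_G^{\geq}$ meets $V$ in exactly the halfspace of $V$ bounding $\Sig$ at its facet opposite $v_{\tau(G)}$. Since $P = \bigcap_G H_G^{\geq}$, intersecting with $V$ gives $P\cap V = \bigcap_G (H_G^{\geq}\cap V)$, an intersection of facet halfspaces of $\Sig$. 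Invoking the standard fact that every vertex of a positive--dimensional polytope is omitted by some facet (equivalently, the antistar of a vertex is non-empty; equivalently, no facet of the polar dual contains all its vertices), the map $\tau$ is onto $\{0,\dots,m\}$, so all $m+1$ facet halfspaces of $\Sig$ occur and $P\cap V = \Sig$. In particular $c\in\cA\cap V\subseteq\tilde Q\subseteq P\cap V = \Sig$.

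Then I would let $F$ be the unique face of $\Sig$ with $c\in\textrm{relint}(F)$, write $F = \conv(\{v_k : k\in J\})$, and set $j := \dim F = |J|-1$ and $i := m-j$. Since $\dim\cA = n-m\geq 1$ and $c\in\textrm{int}(\tilde Q)$, the point $c$ is not a vertex of $P$, hence not a vertex of $\Sig$, so $j\geq 1$. Because $c\in\textrm{relint}(F)\cap\textrm{int}(\tilde Q)$ and the direction spaces of $\affspn(F)$ and $\cA$ meet only in the origin, $\conv(F\cup\tilde Q)$ is the free sum $F\oplus\tilde Q$, combinatorially $\Sig_j\oplus Q$. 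Adjoining the vertices $v_k$ with $k\notin J$ one at a time, the affine hull of the polytope built so far is $\affspn$ of $\cA$ together with the $\Sig$--vertices used up to then, and the next vertex $v_k$ cannot lie in it: if it did, comparing $\fR^n = V\oplus\cA$--components would force $v_k$ into the affine hull of the previously used $\Sig$--vertices, contradicting affine independence of $\cV(\Sig)$. Hence each adjunction is a genuine pyramid, and after $i$ of them one reaches $\conv(\Sig\cup\tilde Q) = P$, so $P\cong\py{i}(\Sig_j\oplus Q)$ with $i+j=m$, $j>0$, $\dim Q = n-m$ (the degenerate case $m=n$, where $P=\Sig$ and $Q$ is a point, being the trivial--simplex case excluded throughout). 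I expect the only genuine obstacle to be the identity $V\cap P = \Sig$: that is where the combinatorics of a special simplex is really used, and it rests on the non--obvious input that no vertex of $P$ lies on every facet; everything afterwards is routine bookkeeping with affine hulls and the definition of the free sum.
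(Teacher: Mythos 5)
Your proof is correct, but it reaches the decomposition $P=\py{i}(\Sig_j\oplus Q)$ by a genuinely different route than the paper. The paper works at the level of the face lattice: after asserting that $\Sig\cap Q$ is a single point interior to $Q$, it observes that $P=\Sig\oplus Q$ exactly when every proper face of $\Sig$ survives as a face of $P$, and then argues that the faces of $\Sig$ missing from $\cF(P)$ form a principal filter generated by one face, each pyramid step accounting for one further vertex; the uniqueness of that minimal missing face is handled by a rather terse argument that intersects two hypothetical incomparable missing faces and derives a contradiction. You instead geometrize the whole question: your lemma $P\cap\affspn(\Sig)=\Sig$ --- proved by intersecting the facet halfspaces of $P$ with $\affspn(\Sig)$ and invoking that every vertex of a positive--dimensional polytope is omitted by some facet, so that all $m+1$ facet halfspaces of $\Sig$ actually occur --- has no counterpart in the paper. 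It lets you pin down the distinguished face $F$ as the unique face of $\Sig$ whose relative interior contains the intersection point $c$, after which the uniqueness issue disappears automatically and the iterated--pyramid bookkeeping follows from affine independence of $\cV(\Sig)$ together with $V\cap\cA=\{c\}$. What each approach buys: yours is fully rigorous precisely where the paper is hand--wavy (``due to the construction of a pyramid and the resulting affine dependence\ldots''), at the cost of the extra halfspace lemma; the paper's stays purely combinatorial and shorter. One slip in your write--up, which is cosmetic rather than substantive: the chain $\cA\cap V\subseteq\tilde Q\subseteq P\cap V$ is false as written, since $\tilde Q\not\subseteq V$; what you mean, and what suffices, is $c\in\tilde Q\cap V\subseteq P\cap V=\Sig$.
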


\begin{proof}
Let $P$ be an arbitrary $n$--polytope with special $m$--simplex $\Sig$. According to condition (b) in Proposition \ref{SatzAequivSpezSimp} we know that $P$ has an $(n-m)$--basis polytope $Q$ and all elements in $\cF(Q)$ are elements in $\cF(P)$. $\Sig \cap Q$ is a single point which is contained in the interior of $Q$ since $P$ is meek and hence each facet of $P$ is the convex hull of a facet of $Q$ and a facet of $\Sig$ lying in the boundary complex of $P$. We know further that we are in the situation 
\begin{eqnarray*}
	P	& =	& \Sig \oplus Q
\end{eqnarray*}
if and only if all elements of $\cF(\partial \Sig)$ are elements of $\cF(P)$. If an $i$--face $F$ (with $\cV(F) := \{v_{F_1},\ldots,v_{F_r}\}$) of $\Sig$ is not an element of $\cF(P)$, then we know due to the construction of a pyramid (and the resulting affine dependence of $\cV(F)$ and $\cV(Q)$) that all $(i+1)$--, \ldots, $(m-1)$--faces (thus $i$--, $(i+1)$--,\ldots, $(m-1)$--simplices) containing $\cV(F)$ are not elements of $\cF(P)$ either. Hence we can realize this situation by
\begin{eqnarray}
	P	& =	& \py{i}\lf(\Sig_{m-i} \oplus Q)\ri) \label{EquMainThm1}.
\end{eqnarray}
Now we show that no further cases are possible: Let $A_Q$ and $A_\Sig$ denote the affine basis spaces of $Q$ and $\Sig$. Moving the vertices of $\cV(Q)$ in $A_Q$ changes either nothing of $\cF(P)$ or it also changes $\cF(Q)$ which is not allowed. Moving the vertices of $\Sig$ in $A_\Sig$ combinatorically only effects the position of the point where $\Sig$ intersects $Q$ and thus which faces of $\Sig$ are not contained in $\cF(P)$. Assume there is a further $i'$--face $G$ of $\Sig$ (with $\cV(G) := \{v_{G_1},\ldots,v_{G_s}\}$) next to $F$ not being an element of $\cF(P)$ and there is no $i$ such that (\ref{EquMainThm1}) holds. But then the $i''$--face $F \cap G$ is also not contained in $\cF(P)$. If $i'' \leq 1$ this is a contradiction, since $\cV(\Sig)$ has to be contained in $\cF(P)$ and otherwise we are directly in the situation
\begin{eqnarray*}
	P	& =	& \py{i''}\lf(\Sig_{m-i''} \oplus Q)\ri)
\end{eqnarray*}
in contradiction to our assumption since in this case all $(i''+ 1)$--, \ldots, $(m-1)$--faces of $\Sig$ containing $\cV(G) \cap \cV(F)$ may not be element of $\cF(P)$ again. 
\end{proof}

\section{Wild polytopes with special simplices}
\label{SubSecWildePolytope}
In Remark \ref{RemPolymayproofild} and Example \ref{ExaWuerfelistwild} we have demonstrated (by looking at the 3--cube) the necessity of the distinction between meek and wild polytopes with special simplex as we have defined it in Definition \ref{DefZahmWild}. After classifying the meek polytopes and demonstrating how to construct them geometricly in the former sections one would like to do the same for wild polytopes.

Unfortunately it is not possible to give an analogue complete classification of the wild polytopes with special simplex in general since the question how many exist for a specific basis polytope and how their $f$--vectors look like cannot be answered without additional information about the basis polytope itself.
 
What we can do, is to give a full classification in the sense that every polytope in $\cS_{(n,k)}(Q)$ can be constructed by intersecting $\Sig_k \oplus Q$ with certain hypersurfaces which we can determine. This Theorem \ref{ThmWildPolyCharac} will furthermore yield  upper bounds for the $f$--vector of wild polytopes with special simplex.\\

Firstly, we show that there is a bijection between the vertex sets of a polytope in $\cS_{(n,k)}(Q)$ and $\Sig_k \oplus Q$:
\begin{lem}
Let $P$ be an $n$--polytope with special $k$--simplex $\Sig$, basis polytope $Q$. Let $A_\Sig$ and $A_{\cF(P) \bs \cV(\Sig)}$ denote the affine basis spaces of $\cF(P) \bs \cV(\Sig)$ resp. $\Sig$ such that w.l.o.g. $0 \in A_{\cF(P) \bs \cV(\Sig)} \cap \Sig$. Let $\psi: A_\Sig \ra A_\Sig$ denote the linear map $v \mapsto w + (1 + \eps) \cdot (v-w)$ with $\eps > 0$ with $w$ denoting the center point of the normal fan of $\Sig$. Then $\psi$ and the projection $\pi: \fR^n \ra \fR^n / A_\Sig$ induce a bijection $\phi: \cV(P) \ra \cV(\Sig \oplus Q)$. We call $\phi$ the \emph{vertex projection} of $P$.
\label{LemVertexProjection}
\end{lem}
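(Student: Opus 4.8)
The plan is to read off the two sides combinatorially from Proposition~\ref{SatzAequivSpezSimp}(b) and Lemma~\ref{Lem2ReiWel1}, and then to exhibit a concrete realisation of $\Sig\oplus Q$ whose vertices are literally the points $\psi(v)$ $(v\in\cV(\Sig))$ and $\pi(v)$ $(v\in\cV(P)\bs\cV(\Sig))$. First I would translate so that the distinguished point of $A_{\cF(P)\bs\cV(\Sig)}\cap\Sig$ is the origin; then $A_\Sig$ is a $k$-dimensional linear subspace, $\Sig\subseteq A_\Sig=\ker\pi$, and $\fR^n\cong A_\Sig\times(\fR^n/A_\Sig)$ with $\dim(\fR^n/A_\Sig)=n-k$. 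Condition~(b) of Proposition~\ref{SatzAequivSpezSimp} gives $\cF(P)\bs\cV(\Sig)=\cF(\partial Q)$, and by Lemma~\ref{Lem2ReiWel1} this isomorphism is the one induced by $\pi$; hence $\dim Q=n-k$ (the complex $\cF(P)\bs\cV(\Sig)$ is pure of dimension $n-k-1$, as in the proof of Proposition~\ref{SatzAequivSpezSimp}), $\pi$ collapses $\Sig$ to the single point $0$, which is an interior point of $Q$ by Lemma~\ref{Lem2ReiWel1}, and, since the complex isomorphism is a bijection on $0$-cells while every vertex of $Q$ is $\pi$ of a vertex of $P$ outside $\cV(\Sig)$, the map $\pi$ restricts to a bijection $\cV(P)\bs\cV(\Sig)\to\cV(Q)$.

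On the simplex side, $\psi$ is an affine automorphism of $A_\Sig$, so $\psi(\cV(\Sig))=\cV(\psi(\Sig))$ is again a set of $k+1$ affinely independent points. The only property of the precise form of $\psi$ that I need is that the origin lies in the \emph{relative interior} of $\psi(\Sig)$: writing $0=w+\tfrac{1}{1+\eps}(\psi(0)-w)$ with $\tfrac{1}{1+\eps}\in(0,1)$, the origin is an interior point of the segment joining the point $w$ (which is interior to $\Sig$, hence to $\psi(\Sig)\supseteq\Sig$) to the point $\psi(0)\in\psi(\Sig)$; therefore $0\in\ttint(\psi(\Sig))$, and in particular $0\notin\cV(\psi(\Sig))$. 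With $\eps=0$ this could fail, since a priori $0$ need only lie in $\Sig$, not in its relative interior.

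Next I would realise $\Sig\oplus Q$ as $R:=\conv\bigl((\psi(\Sig)\times\{0\})\cup(\{0\}\times Q)\bigr)\subseteq A_\Sig\times(\fR^n/A_\Sig)$. The two summands have disjoint vertex sets, their affine spans are the complementary linear subspaces $A_\Sig\times\{0\}$ and $\{0\}\times(\fR^n/A_\Sig)$ meeting only at the origin, and that origin lies in the relative interior of each summand by the preceding step ($0\in\ttint\psi(\Sig)$ and $0\in\ttint Q$). Hence $R$ matches the paper's definition of $\psi(\Sig)\oplus Q\cong\Sig\oplus Q$ — the missing convex weight resting on the common point $0$ — so that, by the identity recalled just after Definition~\ref{DefSimplVerein}, $\cF(\partial R)=\cF(\partial\Sig)*\cF(\partial Q)$ and therefore $\cV(R)=(\psi(\cV(\Sig))\times\{0\})\sqcup(\{0\}\times\cV(Q))$, the union being disjoint because neither $\psi(\Sig)$ nor $Q$ has the interior point $0$ among its vertices.

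Finally, set $\phi(v):=(\psi(v),0)$ for $v\in\cV(\Sig)$ and $\phi(v):=(0,\pi(v))$ for $v\in\cV(P)\bs\cV(\Sig)$. Then $\phi$ is injective on $\cV(\Sig)$ because $\psi$ is, injective on $\cV(P)\bs\cV(\Sig)$ because $\pi$ is there, and the two images are disjoint as they could only coincide at $(0,0)$, which lies in neither; surjectivity onto $\cV(R)=\cV(\Sig\oplus Q)$ is immediate from the description of $\cV(R)$ together with $\pi(\cV(P)\bs\cV(\Sig))=\cV(Q)$. Thus $\phi$ is the desired vertex projection. I expect the third step to be the only non-formal point: one must be sure that placing $\psi(\Sig)$ and $Q$ into complementary subspaces really yields the combinatorial direct sum with no vertex absorbed. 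This is precisely what forces the dilation factor $1+\eps$ (to move $0$ into the relative interior of $\psi(\Sig)$) and what makes essential use of Lemma~\ref{Lem2ReiWel1} (to know that $0$ is interior to $Q$); without either, $\conv(\cV(\Sig)\cup\cV(Q))$ could degenerate to a lower pyramid-type polytope and no such bijection would exist.
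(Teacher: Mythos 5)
Your proof is correct and rests on the same two pillars as the paper's: Proposition~\ref{SatzAequivSpezSimp}(b) together with Lemma~\ref{Lem2ReiWel1} to get the bijection $\pi:\cV(P)\bs\cV(\Sig)\ra\cV(Q)$ with $0$ interior to $Q$, and the $\eps$--dilation $\psi$ to push the intersection point into the relative interior of the simplex. The difference lies in how the resulting point configuration is certified to be $\cV(\Sig\oplus Q)$. The paper splits into the meek and wild cases and, in both, invokes Theorem~\ref{ThmKlassSpezSimp} (uniqueness of the meek polytope determined by $n$, $k$, $Q$ and the position of $\Sig\cap A_{\cF(P)\bs\cV(\Sig)}$) to identify $\conv(\psi(\cV(\Sig))\cup\cV(Q))$ with $\Sig\oplus Q$. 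You instead avoid the classification theorem entirely: you place $\psi(\Sig)$ and $Q$ into the complementary subspaces $A_\Sig\times\{0\}$ and $\{0\}\times(\fR^n/A_\Sig)$, verify that the origin lies in the relative interior of both summands (your explicit computation $0=w+\tfrac{1}{1+\eps}(\psi(0)-w)$ is exactly the point the paper leaves implicit in the phrase ``blows up $\Sig$ slightly''), and then read off $\cV(R)=\psi(\cV(\Sig))\sqcup\cV(Q)$ from the identity $\cF(\partial(P_1\oplus P_2))=\cF(\partial P_1)*\cF(\partial P_2)$. This buys a uniform treatment with no meek/wild case split and no dependence on Theorem~\ref{ThmKlassSpezSimp}, at the cost of having to check the direct--sum hypotheses by hand; the paper's route is shorter given that the classification theorem is already available. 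Your closing remark about why $\eps>0$ is genuinely necessary (with $\eps=0$ the point $0$ need not be interior to $\Sig$ and a vertex could be absorbed) is a worthwhile observation that the paper does not spell out.
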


\begin{proof}
We have $\# \cV(P) = \# \cV(\Sig \oplus Q)$ since $\cF(P) \bs \cV(\Sig) = \cF(\partial Q)$ due to Proposition \ref{SatzAequivSpezSimp}. If $P$ is meek the lemma is trivial since in this case we have $A_{\cF(P) \bs \cV(\Sig)} \cap \Sig = \{0\}$ and $\pi_{|\cV(P)|} = \id$. $\psi$ ensures that $Q \cap \Sig$ intersects in the interior of $\Sig$ since it blows up $\Sig$ slightly. Thus, we obtain $\Sig_k \oplus Q$ due to Theorem \ref{ThmKlassSpezSimp} and we have $\phi(v) = v$ for $v \in \cV(P) \bs \cV(\Sig)$ and $\phi(v) = \psi(v)$ for $v \in \cV(\Sig)$.

If $P$ is wild Proposition \ref{SatzAequivSpezSimp} tells us that $\cF(P) \bs \cV(\Sig) = \cF(\partial Q)$ and Definition  \ref{DefBasispolytop} and Lemma \ref{Lem2ReiWel1} yield $\pi(\cF(P) \bs \cV(\Sig)) = \cF(\partial Q)$. Since we know that due to Theorem \ref{ThmKlassSpezSimp} there is one unique meek polytope with special simplex determined by $n$,$k$, $Q$, and the position of $\Sig \cap A_{\cF(P) \bs \cV(\Sig)}$ in $\Sig$, $\pi_{|\cV(P) \bs \cV(\Sig)}$ induces a bijection between $\cV(P)$ and the vertex set of a meek polytope in $\cM_{(n,k)}(Q)$. From this point one goes on as above.
\end{proof}

Now we are attempting to prove our main theorem which will be a generalization of Theorem \ref{ThmKlassSpezSimp}. Before we can do that we need to introduce two definitions:

\begin{defn}
Let $L(P)$ be the face lattice of an $n$--polytope $P$ in $\fR^n$ and $Q$ an $k$--polytope such that $\cF(\partial Q) \subseteq \cF(\partial P)$ (and thus $L(Q) \subseteq L(P)$). Let $H$ be a hyperlane with $H \cap Q \neq \emptyset$ which is combinatorially defined by the vertices of $P$.
\begin{enumerate}
	\item[(a)] We call $H$ $Q$--\emph{separating} if it is possible to deform $P$ into an $n$--polytope $P'$ such that there is a bijection between $\cV(P)$ and $\cV(P')$, $\cF(\partial Q) \subseteq \cF(\partial P')$ and the interior of every face of $Q$ is completely contained in $H$ or in $H^+$ or in $H^-$. We call $P'$ an $H$--\emph{realization} of $P$ (resp. $L(P)$)
	\item [(b)] If there is an $H$--realization $P'$ with $H^- \cap Q = \emptyset$, we call $H$ $Q$--\emph{supportable}.
	\item [(c)] If we have $L(P) = L(P')$ we say that $H$ separates (resp. supports) $Q$ \emph{trivially}.
	\item [(d)] Let $\cH$ be a set of hyperplanes. We call $\cH$ \emph{simultaneously} $Q$--\emph{separating / --supportable} if there is one deformation $P'$ of $P$ such that every $H \in \cH$ is $Q$--separating / --supportable with $H$--realization $P'$.
\end{enumerate}
\label{DefQSeperable}
\end{defn}

Observe that $Q$--separatability / --supportability is a pure combinatorial property of $P$ which does not depend on the geometry of $P$. Observe furthermore that this definition in particular implies that for an $Q$--separating hyperplane it is possible to arrange $P$ in a way such that the intersection of $H$ and an arbitrary face $F$ of $Q$ is a face of $Q$ again. If the separation is non--trivial, $H$ has to intersect $Q$ along certain connected ridges of $Q$. Finally observe that the cases $\dim(Q) \in \{1,2\}$ are easy: For $\dim(Q) = 1$ every $H$ separates $Q$ trivially. For $\dim(Q) = 2$ exactly every hyperplane intersecting $Q$ in a pair of vertices is $Q$--separating since $Q$ is an $n$--gon.\\

\begin{exa}~
\begin{enumerate}
	\item [(a)] Let $P = Q$ such that $P$ is the 3--polytope from figure \ref{AbbExaSeperation}. The orange hyperplane in the left picture separates the polytope since every face is completely contained in the hyperplane or in its positive / negative halfspace. The lilac hyperplane in the right picture does not separate the polytope since there are three facets which are divided into the positive and the negative halfspace along the blue lines and it is impossible to move vertices in or beneath or beyond the hyperplane without changing $L(P)$. Both hyperplanes are not $Q$--supportable since this is obviously never possible if $\dim(P) = \dim(Q)$.
		
	\begin{figure}[H]
	\begin{center}
	\begin{picture}(400,200)(0,0)
		\put(0,0){\includegraphics[scale=0.4]{./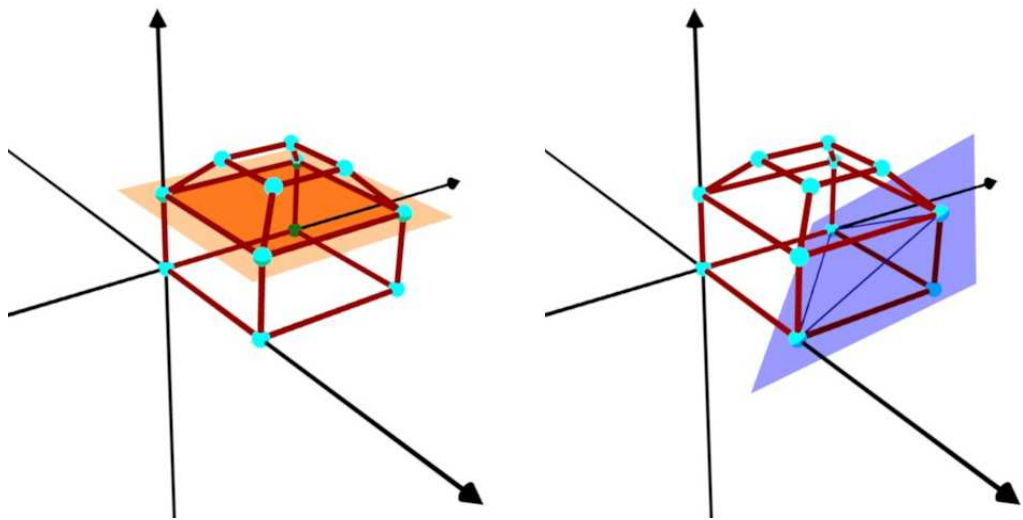}}
	\end{picture}
	\caption{A polytope with a separating and a non--separating hyperplane.}
	\label{AbbExaSeperation}
	\end{center}
	\end{figure}

	\item [(b)] Let $Q$ be the 3--zonotope given in the left picture of figure \ref{Abb4CubeBasispoly} and $P$ be the 4--polytope $\pyr Q$ with Schlegel--diagram given in the middle picture of figure \ref{Abb4CubeBasispoly} (both as lattice models; the green point is the apex of the pyramid; for additional information about Schlegel--diagrams see cf. \cite[p. 132 et. seq.]{Zieg1}). Then the set $\cH$ of four hyperplanes (3--dimensional) which are each given by the apex and three of the turquoise vertices of $Q$ which are intersecting in the six orange 2--planes in the right picture of figure \ref{Abb4CubeBasispoly} is simultaneously $Q$--separating as the right picture shows. It is even simultaneously $Q$--supportable as we will see at the end of this section.

	\begin{figure}[H]
	\begin{center}
	\begin{picture}(570,160)(0,0)
		\put(0,0){\includegraphics[scale=0.38]{./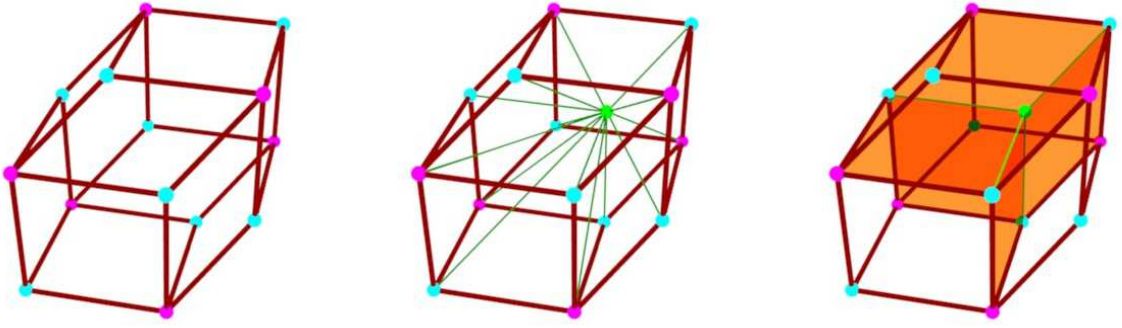}}
	\end{picture}
	\caption[A 3--polytope, its pyramid and a set of simultaneously supportable hyperplanes.]{A 3--polytope $Q$, its pyramid $P := \pyr Q$ and a set of simultaneously $Q$--supportable hyperplanes.}
	\label{Abb4CubeBasispoly}
	\end{center}
	\end{figure}
\end{enumerate}
\label{ExaQSeperable}
\end{exa}

\begin{defn}
Let $P$ be polytope with special simplex $\Sig$, basis polytope $Q$ and vertex projection $\phi$. Let $H$ be an arbitrary hyperplane. We define the $H$--\emph{corresponding} hyperplane $\wh H$  for $\Sig \oplus Q$ ($= \conv(\phi(\cV(P)))$) in the following way:
If we have $\cF(P) \bs \cV(\Sig) \subset H$ then we define $\wh H$ by $Q \subset \wh H$ and
\begin{eqnarray*}
	v \in \wh H		& \Lera	& v \in H  \text{ for } v \in \cV(\Sig).
\end{eqnarray*}
If $\cF(P) \bs \cV(\Sig) \not\subset H$ we define $\wh H$ as the affine hull of elements of $\cV(\Sig \oplus Q)$ such that by
\begin{eqnarray*}
	\phi(v) \in \wh H		& \Lera	& v \in H  \text{ for } v \in \cV(\Sig) \\
	\phi(w) \in \wh H \cap \wh H^-	& \Lera	& w \in H  \text{ for } w \in \cV(P) \bs \cV(\Sig).
\end{eqnarray*}
\label{DefCorrespHyperplane}
\end{defn}
That means the corresponding hyperplane intersects all those vertices $\phi(v)$ of $Q$ whose preimage $v$ is as well contained in a face of $Q$ in $H$ as in a face of $Q$ contained in $H^+$.\\

Observe that for every hyperplane $H$ there always exists a corresponding hyperplane since $\phi$ maps $\cF(P) \bs \cV(\Sig) \cap H$ to a connected part of $\cF(\partial Q)$ and all $v \in \cV(\Sig)$ are contained in the interior of the simplex spaned up by $\{\phi(v): v \in \cV(\Sig)\}$.

\begin{exa}
The green supporting hyperplane of the wild polytope on the left picture has a corresponding hyperplane in the direct sum of the special one simplex and the 8--gon which is given by the violet vertices. The blue part of the 8--gon is the part of $Q$ which is separated by the corresponding hyperperlane and thus is contained in its negative halfspace.
\begin{figure}[H]
	\begin{center}
	\begin{picture}(360,190)(0,0)
		\put(0,0){\includegraphics[scale=0.36]{./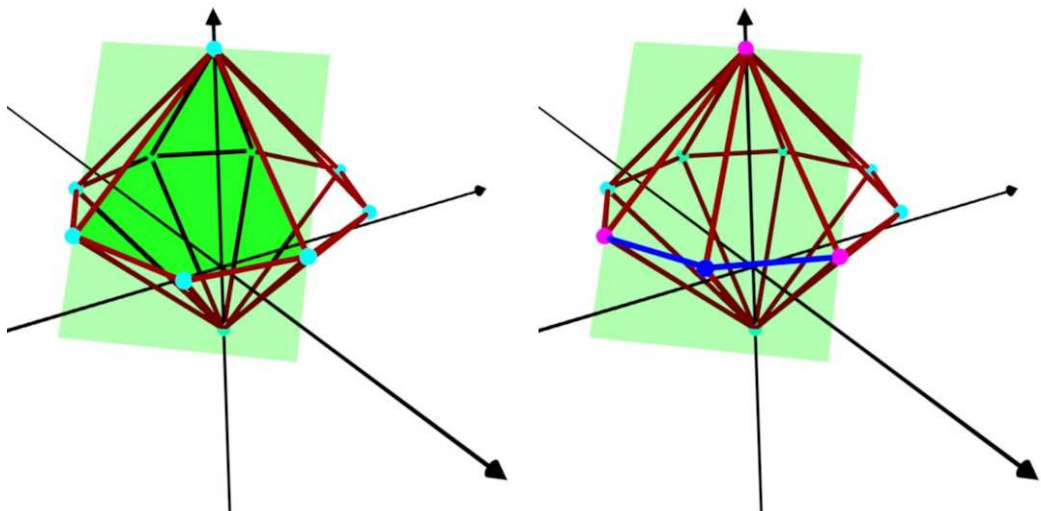}}
	\end{picture}
	\caption[A supporting hyperplane of a wild polytope and its corresponding hyperplane.]{A supporting hyperplane of a wild polytope and its corresponding hyperplane in the direct sum. The blue part of the 8--gon is separated by the corresponding hyperplane which intersects the direct sum in the pink vertices.}
	\label{AbbExaCorrespHyperplane}
	\end{center}
\end{figure}
\end{exa}

\begin{thm}
An $n$--polytope $P$ contains a special $k$--simplex $\Sig$ with basis polytope $Q$ (i.e.:  $P \in \PS_{(n,k)}(Q)$) if and only if the following conditions hold: Let $\cH$ denote the set of bounding hyperplanes of facets of $P$ and $\wh \cH$ the set of corresponding hyperplanes in $\Sig \oplus Q$:
\begin{enumerate}
	\item[(a)] $\wh \cH$ is simultaneously $Q$--supportable in $\Sig \oplus Q$.
 	\item[(b)] Every $\wh H \in \wh \cH$ is a bounding hyperplane of $\Sig \oplus Q$, contains $Q$ or there exist a facet $F$ of $\Sig \oplus Q$ such that $\ttint(F)$ is completely contained in $\wh H^-$.
	\item[(c)] 
	For every $\wh H \in \wh \cH$ there exists exactly one $w_0 \in \cV(\Sig) \cap \wh H^+$.
\end{enumerate}
Furthermore $P$ is meek if all corresponding hyperperlanes $\wh H$ separate $Q$ trivially.
\label{ThmWildPolyCharac}
\end{thm}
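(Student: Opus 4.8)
The plan is to use the vertex projection $\phi\colon\cV(P)\ra\cV(\Sig\oplus Q)$ of Lemma~\ref{LemVertexProjection} and Proposition~\ref{SatzAequivSpezSimp} as a dictionary between $P$ and the canonical meek polytope $\Sig\oplus Q=\conv(\phi(\cV(P)))$: in the forward direction I would read the three conditions off the facet geometry of $P$, and in the converse I would reconstruct $P$ from $\Sig\oplus Q$ by cutting with the hyperplanes of $\wh\cH$.

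For the forward direction, assume $P\in\PS_{(n,k)}(Q)$; then Proposition~\ref{SatzAequivSpezSimp}(b) gives $\cF(P)\bs\cV(\Sig)=\cF(\partial Q)$, so $\phi$ and all corresponding hyperplanes are defined. Fixing a facet $F$ of $P$ with bounding hyperplane $H$ (with $H^+$ the side containing $\ttint(P)$), the special-simplex property says $F$ contains exactly $k$ of the $k+1$ vertices of $\Sig$ and the remaining vertex $w_0$ lies strictly in $H^+$; unwinding Definition~\ref{DefCorrespHyperplane} to see that $w_0\mapsto\wh H^+$ and the other $k$ simplex vertices map onto $\wh H$ yields~(c). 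Since $Q\subseteq P\subseteq H^+$ one has $H^-\cap Q=\emptyset$ and $H\cap Q=F\cap Q=F\bs\cV(\Sig)$, a single face of $Q$ (the trace of a supporting hyperplane of $P$); a case split on this face --- $F\supseteq Q$, or $F\bs\cV(\Sig)$ a facet of $Q$, or a lower face contained in some facet $F_2$ of $Q$ --- gives respectively $Q\subset\wh H$, $\wh H$ a bounding hyperplane of $\Sig\oplus Q$, or $\conv((\cV(\Sig)\bs\{w_0\})\cup F_2)$ a facet of $\Sig\oplus Q$ with interior in $\wh H^-$, which is~(b). Finally, $P$ itself --- viewed through $\phi$ as a deformation of $\Sig\oplus Q$ fixing $\cF(\partial Q)$ --- realizes every $\wh H$ by its facet hyperplane $H$ with $H^-\cap Q=\emptyset$, which is~(a).

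For the converse, assume (a)--(c), let $P'$ be the simultaneous $Q$-supporting realization of $\Sig\oplus Q$ guaranteed by~(a), and form $P:=P'\cap\bigcap_{\wh H\in\wh\cH}\wh H^+$. Supportability gives $Q\subseteq P$ and, by~(c), all but one vertex of $\Sig$ lie on each $\wh H$ with the last strictly in $\wh H^+$, so $\Sig\subseteq P$ and $\Sig$ meets the affine span of $\cF(P)\bs\cV(\Sig)$ in an interior point. Each facet of $P$ is either an inherited facet of $\Sig\oplus Q$ or lies on some $\wh H$, and in both cases --- using~(c) for the new facets --- carries exactly $k$ vertices of $\Sig$, so $\Sig$ is a special $k$-simplex; condition~(b) then ensures the cuts destroy only ``facets over $\Sig$'', so no vertex of $Q$ disappears and $\cF(P)\bs\cV(\Sig)$ is still $\cF(\partial Q)$, i.e. $P\in\PS_{(n,k)}(Q)$ with basis polytope $Q$. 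For the addendum, if every $\wh H$ separates $Q$ trivially then by Definition~\ref{DefQSeperable}(c) one may take $L(P')=L(\Sig\oplus Q)$, each cut merely chopping a pyramid-like piece over a facet of $\Sig$; so $P$ is produced from $\Sig\oplus Q$ by precisely the facet-parallel cuts described after Proposition~\ref{SadefShell}, whence $P=\py{i}(\Sig_j\oplus Q)$ is meek by Theorem~\ref{ThmKlassSpezSimp}.

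The step I expect to be the main obstacle is the converse: verifying that $P=P'\cap\bigcap\wh H^+$ has \emph{exactly} the intended face lattice --- that simultaneous $Q$-supportability really permits all cuts to be made consistently on one polytope, that (b) and (c) force each cut to create a single clean facet with exactly the right $k$ simplex vertices, and that no spurious $\Sig$-free faces appear, so $\cF(P)\bs\cV(\Sig)=\cF(\partial Q)$ on the nose. A secondary difficulty is the orientation bookkeeping in Definition~\ref{DefCorrespHyperplane}: one must check that ``interior side of $P$'', ``side containing $Q$'', and ``side containing the extra simplex vertex $w_0$'' always coincide for each $\wh H$, which is exactly what ties condition~(c) to the special-simplex property.
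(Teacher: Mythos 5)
Your overall strategy coincides with the paper's: use the vertex projection $\phi$ of Lemma \ref{LemVertexProjection} and Proposition \ref{SatzAequivSpezSimp} as a dictionary between $P$ and $\Sig\oplus Q$, read (a)--(c) off the facet hyperplanes of $P$ in the forward direction, and recover $P$ from $\Sig\oplus Q$ by cutting with the hyperplanes of $\wh\cH$ in the converse. The converse and the meekness addendum follow the paper's line (the paper phrases the cuts as ``moving the vertices of $\wh H^-$ into $\wh H$'' and explicitly identifies this with your intersection picture), and the difficulty you flag about verifying the face lattice of $P'\cap\bigcap_{\wh H}\wh H^+$ is real but is treated no less briefly in the paper itself.

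There is, however, a genuine gap in your forward argument for (b). You assert that for a facet $F$ of $P$ with bounding hyperplane $H$ the trace $F\bs\cV(\Sig)$ is a \emph{single} face of $Q$, and you build your trichotomy on that premise. This is exactly what fails for wild polytopes, which are the case the theorem is designed for: $F$ is $(n-1)$--dimensional and contains only $k$ vertices of $\Sig$, so $F\bs\cV(\Sig)$ has dimension at least $n-1-k=\dim Q-1$, and in the wild case it is a \emph{union of two or more facets} of $Q$. (In the $4$--cube a facet is a $3$--cube containing one vertex of the special $1$--simplex, and deleting that vertex leaves three squares, i.e.\ three facets of the zonotope $Q_4$.) Consequently your third case (``a lower face contained in some facet $F_2$ of $Q$'') never actually occurs, while the configuration that really carries condition (b) --- several facets of $Q$ lying in $H$, whose $\phi$--images are in part strictly inside $\wh H^-$, so that each of them joins $\conv(\cV(\Sig)\bs\{w_0\})$ to form a facet of $\Sig\oplus Q$ whose interior lies in $\wh H^-$ --- is missing from your analysis. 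The paper instead splits on whether the set $M=\{\phi(v):v\in\cV(F)\bs\cV(\Sig)\}\cap\wh H^-$ is empty and on whether $\cF(P)\bs\cV(\Sig)\subset H$; you would need to replace your case split by that one (or an equivalent) for (b) to go through. A smaller inaccuracy in the same sentence: $Q$ is a quotient of $P$, not a subset, so ``$Q\subseteq P$'' and ``$H\cap Q$'' must be read as statements about the subcomplex $\cF(P)\bs\cV(\Sig)$ and its isomorphic copy $\cF(\partial Q)$, not about a literal geometric intersection.
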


\begin{proof}
First we show that conditions (a) --- (c) are satisfied if $P$ is a polytope with special simplex. Due to vertex projection we have a bijection between $\cV(P)$ and $\cV(\Sig \oplus Q)$. Proposition \ref{SatzAequivSpezSimp} yields $\cF(P) \bs \cV(\Sig) = \cF(\partial Q)$. This implies that the corresponding hyperplanes of all bounding hyperplanes of $P$ have to be simultaneously $Q$--supportable since otherwise it would be impossible to move the vertices of $\wh H^-$ into $\wh H$ (simultaneously for every $\wh H \in \wh \cH$) without changing the combinatorical structure of $\cF(P) \bs \cV(\Sig)$; this is (a). 

Let $H \in \cH$. Let $v_1,\ldots,v_j \in \cV(P) \bs \cV(\Sig)$, $w_1,\ldots,w_k \in \cV(\Sig)$ denote the vertices of $P$ which are contained in $H$ and $M := \{\phi(v_1),\ldots,\phi(v_j)\} \cap \wh H^- \subset \cV(Q)$ denote the vertices of $Q$ which are contained in $\wh H^-$. Notice that $\{\phi(w_1),\ldots,\phi(w_k)\} \subset \wh H$ or $\{\phi(w_1),\ldots,\phi(w_k)\} \subset \wh H^-$. If $M = \emptyset$, i.e. if $\wh H$ separates $Q$ trivially, and $\cF(P) \bs \cV(\Sig) \not\subset H$ we have  $\{\phi(w_1),\ldots,\phi(w_k)\} \subset \wh H$ and $\wh H$ is a bounding hyperplane. If $M = \emptyset$ and $\cF(P) \bs \cV(\Sig) \subset H$, we have $\{\phi(w_1),\ldots,\phi(w_k)\} \subset \wh H^-$. Then $\phi(w_1),\ldots,\phi(w_k)$ forms a facet of $\Sig \oplus Q$ with every facet of $Q$. These are contained in $\wh H^-$. If $M \neq \emptyset$, then $\phi(v_1),\ldots,\phi(v_j)$ form at least two facets of $Q$ since $\wh H$ is $Q$--supportable and during the deformation from $\Sig \oplus Q$ to $P$ there are only vertices in $\wh H^-$ moved into $\wh H$ due to Definition \ref{DefCorrespHyperplane}. These facets form a facet of $\Sig \oplus Q$ together with $\phi(w_1),\ldots,\phi(w_k)$; this is (b).

Let $\cV(\Sig) = \{w_0,\ldots,w_k\}$. Since $\Sig$ is special simplex in $P$, we have w.l.o.g. $w_1,\ldots,w_k \in H$ and $w_0 \in H^+$. Since $\Sig \subset \conv(\{\phi(w_0),\ldots,\phi(w_k)\})$ we have $\phi(w_1),\ldots,\phi(w_k) \in \wh H \cup \wh H^-$ and $\phi(w_0) \in \wh H^+$ and (c) holds.\\

Now we move the vertices of a polytope $P$ with special $k$--simplex $\Sig$ and basis polytope $Q$ into a set of hyperplanes $\cH$ such that for every hyperplane $H \in \cH$ (a) --- (c) holds. We show that this yields a polytope $P'$ with special $k$--simplex and basis polytope $Q$ again. 

Since $\cH$ is simultaneously $Q$--supportable due to (a) all vertices may be moved into the hyperplanes $H \in \cH$ without changing the combinatorial structure of $\cF(P) \bs \cV(\Sig)$. Hence $P'$ has a basis polytope $Q$ again. 

Condition (b) yields that $H \cap P'$ is $(n-1)$--dimensional again since $H^-$ contains at least one facet of $P$, the vertices of $\Sig$ remain a simplex since $P'$ is an $n$--polytope and no vertex of $\Sig$ may be contained in the affine span of a face of $Q$ since otherwise the structure of $\cF(P) \bs \cV(\Sig)$ was changed.

For every $H \in \cH$ there is exactly one $w_0 \in H^+$ due to (c). Thus $\{w_1,\ldots,w_k\} := \cV(\Sig) \bs \{w_0\} \subset H \cup H^-$ and will be moved into $H$ and therefore $P'$ has a special $k$--simplex again.\\

It remains to prove the condition for meekness of $P$. Let $P$ be as before with $P = (\Sig \oplus Q) \cap \bigcap_{H \in \cH} H$ such that (a) --- (c) are satisfied and furthermore all $H$ separate $Q$ trivially. We may assume w.l.o.g. that all $H \in \cH$ are no bounding hyperplanes of $P$. Then $Q \subset \bigcap_{H \in \cH} H$ and thus $\cF(P) \bs \Sig$ is contained in the affine basis space of $Q$. Hence $P$ is meek. Notice that the intersection with hyperplanes of this type corresponds to the transformation of direct sums into pyramids via intersections which we already mentioned after Proposition \ref{SadefShell}.
\end{proof}

Note that the Theorem does not state that it is possible to move all vertices of $\Sig \oplus Q$ in the corresponding hyperplanes of $P$ \emph{successively} and get a polytope in $\cS_{(n,k)}(Q)$ after moving the vertices of one or some particular $\wh H^-$. It states that one gets a polytope in $\cS_{(n,k)}(Q)$ if one moves the vertices of $\Sig \oplus Q$ to all corresponding hyperplanes \textit{simultaneously} such that the hyperplanes satisfies the conditions (a) --- (c).

Instead of moving vertices of $\Sig \oplus Q$ into certain corresponding hyperplanes $P$, the whole process might of course also be interpreted as simply intersecting $\Sig \oplus Q$ with the corresponding hyperplanes. This is maybe the easier way to understand the connection between $P$ and $\Sig \oplus Q$ but it has the disadvantage that it is not a continuous deformation anymore.\\

Thus, the reason why it is hard to classify all wild polytopes with special $k$--simplex and basis polytope $Q$ is that it is difficult to figure out the sets of simultaneously $Q$--supportable hyperplanes satisfying the upper conditions if $Q$ is sufficiently complicated. The situation gets even worse if one is not only interested in the $f$--vector of the specific wild polytope but also in its face lattice. The following example shows that --- in contrast to the meek case ---the face lattice of a wild polytope with certain $f$--vector is not unique:
\begin{exa}
Both polytopes on the pictures below have the $f$--vector $(1,10,22,14,1)$ and both are wild polytopes with special 1--simplex and the 8--gon as basis polytope. Anyway, the polytopes have different face lattices and thus are non--isomorphic resp. have a different geometric realization.
\begin{figure}[H]
	\begin{center}
	\begin{picture}(400,200)(0,0)
		\put(0,0){\includegraphics[scale=0.35]{./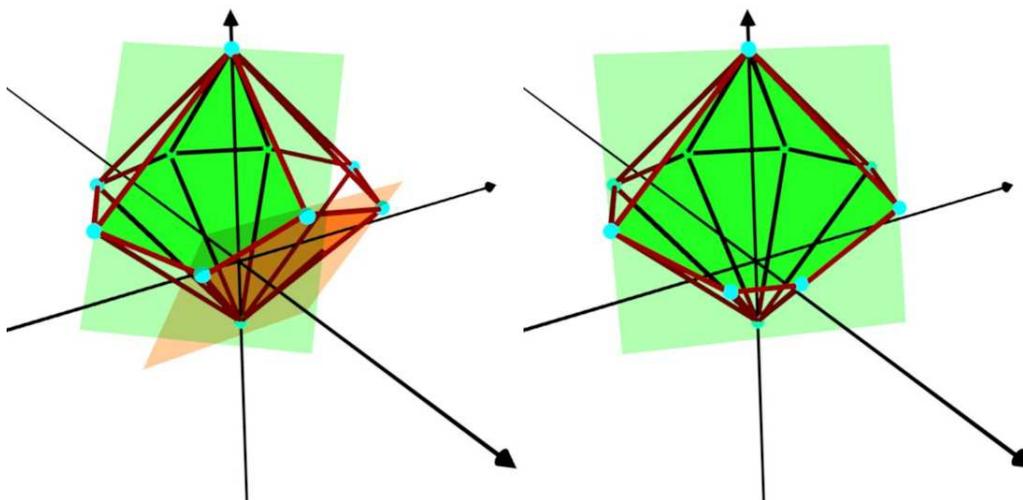}}
	\end{picture}
	\caption[Two non--isomorphic, combinatorial equivalent wild polytopes.]{Two non--isomorphic, combinatorial equivalent wild polytopes with special 1--simplex and identical basis polytope.}
	\label{AbbExaWildPoly}
	\end{center}
\end{figure}
\end{exa}

Indeed we are unable to give specific formulas for possible $f$--vectors of wild polytopes with special simplex without further information about the basis polytope. But we are able to find an upper bound for the $f$--vector:

\begin{cor}
Let $P \in \PW_{(n,k)}(Q)$ with special simplex $\Sig$ and $P' \in \PM_{(n,k)}(Q)$ the polytope one obtains by applying the vertex projection on the vertices not belonging to $\Sig$. I.e.: $P' = \conv(\phi(\cV(P) \bs \cV(\Sig)) \cup \cV(\Sig))$. Then we have for the $f$--vectors $f_P$ and $f_{P'}$:
\begin{eqnarray*}
		f_{P_0}	& =	& f_{P'_0} \\
		f_{P_i}	& <	& f_{P'_i} \quad \text{ for } i \in \{n-1,n\} \\
		f_{P_i}	& \leq	& f_{P'_i} \quad \text{ for } i \in \{1,\ldots,n-2\}.
\end{eqnarray*}
Observe that $f_{P'}$ is componentwisely bounded by $f_{\Sig \oplus Q}$ and $f_{\Sig \oplus Q}$ can always be computed explicitly.
\label{CorBoundaries}
\end{cor}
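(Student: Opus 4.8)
The plan is to place the three polytopes $P$, $P'$ and $\Sig\oplus Q$ inside the construction of Theorem \ref{ThmWildPolyCharac} and to read off the comparison of $f$--vectors from the way faces are \emph{merged} when one passes from $\Sig\oplus Q$ to the meek polytope $P'$ and then to the wild polytope $P$. The equality $f_{P_0}=f_{P'_0}$ is immediate: by Lemma \ref{LemVertexProjection} the vertex projection $\phi$ is a bijection $\cV(P)\to\cV(\Sig\oplus Q)$ which fixes $\cV(\Sig)$ and restricts to a bijection $\cV(P)\bs\cV(\Sig)\to\cV(Q)$, so $P'=\conv(\phi(\cV(P)\bs\cV(\Sig))\cup\cV(\Sig))$ has the same number of vertices as $P$; and by Proposition \ref{SatzAequivSpezSimp} both $P$ and $P'$ have $\cF(\partial Q)$ as their boundary complex away from $\Sig$, so no vertex is created or destroyed.

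For the inequalities $f_{P_i}\le f_{P'_i}$ with $1\le i\le n-1$ I would show that the passage from the meek $P'$ to the wild $P$ (the vertices are moved into the corresponding hyperplanes of $P$, Theorem \ref{ThmWildPolyCharac}) only \emph{coarsens} the boundary complex, i.e. $\cF(P)$ arises from $\cF(P')$ by gluing faces together and never by splitting one. Indeed, the faces using no vertex of $\Sig$ are the faces of $Q$ and agree for $P$ and $P'$ (Proposition \ref{SatzAequivSpezSimp}); a face of $P$ using vertices of $\Sig$ has $\Sig$--part a face of the simplex $\Sig$, hence a simplex, which cannot split during the deformation, while the part of $\cF(\partial Q)$ it covers is a union of faces of $Q$ glued along ridges, because each corresponding hyperplane is $Q$--supportable and meets $Q$ along a connected system of ridges (conditions (a),(b) of Theorem \ref{ThmWildPolyCharac} and the remark after Definition \ref{DefQSeperable}). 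So such faces can only become glued together (when several of these $Q$--pieces, together with the common $\Sig$--piece, become coplanar), never pulled apart. Consequently each $i$--face of $P$ is a union of $i$--faces of $\cF(P')$, and since the intersection of two faces of $P$ is a face of $P$ of lower dimension, these unions are pairwise disjoint for distinct $i$--faces of $P$; choosing one $i$--face of $P'$ from each union gives an injection of the $i$--faces of $P$ into the $i$--faces of $P'$, i.e. $f_{P_i}\le f_{P'_i}$. For $i=n-1$ this is strict: $P$ wild forces, via Definition \ref{DefZahmWild}(b) and the last assertion of Theorem \ref{ThmWildPolyCharac}, some corresponding hyperplane to separate $Q$ non--trivially, hence some facet of $P$ to be the union of $\ge2$ facets of $P'$. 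The ridge shared by two merged facets becomes interior to the resulting facet of $P$, and no ridge of $P$ is created (a ridge of $P$ through a vertex of $\Sig$ is the intersection of two facets of $P$, each a union of facets of $P'$, hence already a union of ridges of $P'$); thus at least one ridge of $P'$ is absorbed and $f_{P_{n-2}}<f_{P'_{n-2}}$, while the same coarsening gives $f_{P_i}\le f_{P'_i}$ for the remaining $i$.

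Next I would bound $f_{P'}$ componentwise by $f_{\Sig\oplus Q}$. By Theorem \ref{ThmKlassSpezSimp}, $P'=\py{i}(\Sig_{k-i}\oplus Q)$ for some $0\le i\le k-1$, and (as explained after Proposition \ref{SadefShell}) $\py{i}(\Sig_{k-i}\oplus Q)$ is obtained from $\Sig_k\oplus Q$ by successively intersecting with $i$ hyperplanes, each containing $Q$, parallel to a facet of the current simplex and cutting off the pyramidal cap over that facet. Each such section keeps the vertex number fixed — it removes the $k$ vertices of that facet and introduces the $k$ vertices of its parallel copy inside the affine span of $Q$ — and, by the elementary bookkeeping of the faces created and destroyed in a hyperplane section, does not increase $f_j$ for $j\ge1$; iterating gives $f_{P'_j}\le f_{(\Sig_k\oplus Q)_j}$ for $j\ge1$ and $f_{P'_0}=f_{(\Sig_k\oplus Q)_0}$. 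Alternatively one feeds the explicit value of $f_{\Sig_k\oplus Q}$ from Proposition \ref{SatzKombStrukShell} and the pyramid recursion $f_{\py{1}(X)_j}=f_{X_j}+f_{X_{j-1}}$ into a direct binomial comparison. In either case $f_{\Sig\oplus Q}$ is explicitly computable by Proposition \ref{SatzKombStrukShell}.

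The hard part is the second paragraph: the naive vertex--set assignment $F\mapsto\conv(\phi(\cV(F)))$ does \emph{not} send faces of $P$ to faces of $P'$ — already for the $n$--cube a square facet is mapped to a non--face of the hexagon bipyramid — so one really has to argue through the normal fans and prove that $\cF(P)$ is an honest coarsening of $\cF(P')$, that is, that the subcomplexes of $\cF(P')$ cut out by the facets of $P$, and more generally the $i$--faces of $\cF(P')$ absorbed into a given $i$--face of $P$, are pairwise disjoint and together exhaust $\cF(P')$; this is where the interaction of the corresponding hyperplanes with the face structure of $Q$ has to be controlled carefully. One also has to extract cleanly the characterisation of wildness used for the strict inequalities, namely that $P$ is wild if and only if some corresponding hyperplane separates $Q$ non--trivially, from Definition \ref{DefZahmWild}(b) and Theorem \ref{ThmWildPolyCharac}. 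The two estimates of the third paragraph are then routine.
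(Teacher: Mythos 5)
Your proposal is correct and follows essentially the same route as the paper: the paper's own proof is a two‑sentence sketch noting that $f_{P_0}=f_{P'_0}$ comes from the definition of $\phi$ and that, by Theorem \ref{ThmWildPolyCharac}, $P$ arises from $P'$ by intersecting with hyperplanes each of which merges several facets of $P'$ into one facet of $P$ — exactly the coarsening argument you elaborate. You supply the details the paper omits (the injections on $i$--faces, the strictness via a non‑trivially separating hyperplane, and the bound $f_{P'}\le f_{\Sig\oplus Q}$, which the paper only asserts as an observation), and your reading of the strict inequalities as concerning facets and ridges matches the index convention used in the introduction.
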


\begin{proof}
$f_{P_0} = f_{P'_0}$ follows directly from the definition of $\phi$. Theorem \ref{ThmWildPolyCharac} shows that the $P$ can be achieved from $P'$ by intersecting $P'$ with additional hyperplanes such that each intersection combines facets $G_1,\ldots,G_r$ of $P'$ to one large facet of $P$.
\end{proof}

Notice furthermore that all polynomials in $\PS_{(n,k)}(Q)$ have the same reverse lexicographic triangulation with vertexorder $\tau := (v_p,\ldots,v_0)$ and $\cV(\Sig) = (v_0,\ldots,v_k)$ since Athanasiadis' Lemma \ref{Lem2Atha1} tells us that for every polytope $P \in \PS_{(n,k)}(Q)$ the reverse lexicographic triangulation $\tri_{\tau}(P)$ is given by:
\begin{eqnarray*}
	\tri_{\tau}(P)	& =	& \tri_{\tau}(Q) * \Sig
\end{eqnarray*}

As we have already mentioned after Definition \ref{DefQSeperable}, $Q$--separability / --supportability becomes rather trivial if $\dim(Q) = 2$. Thus, we want to handle this case separately:
\begin{cor}
All Polytopes $P \in \cS_{n,n-2}(Q)$ can be constructed out of $\Sig_{n-2} \oplus Q$ by intersection with hyperplanes $H \in \cH$ defined on $\cV(\Sig_{n-2} \oplus Q)$ such that for every $H \in \cH$:
\begin{enumerate}
	\item[(a)] There exists exactly one $w_0 \in H^+ \cap \cV(\Sig_{n-2})$, $H$ contains exactly two $v_0,v_1 \in \cV(Q)$ and all $w_i \in \cV(\Sig) \bs \{w_0\}$ or it contains $Q$.
	\item[(b)] Every vertex $v \in \cV(Q)$ is contained in at most $n-2$ negative halfspaces of $H \in \cH$ or $Q$--containing hyperplanes.
\end{enumerate}
\label{CorWildPolyBasisPolyDim2}
\end{cor}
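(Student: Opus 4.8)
The plan is to specialize Theorem \ref{ThmWildPolyCharac} to $k=n-2$, feeding in that the basis polytope $Q$ is a $2$-polytope, i.e. an $N$-gon. Throughout I realize $Q$ in its affine plane $A_Q$ inside $\Sig_{n-2}\oplus Q$, with $A_\Sig$ the $(n-2)$-plane spanned by the $n-1$ vertices $w_0,\ldots,w_{n-2}$ of $\Sig_{n-2}$, and $A_\Sig\cap A_Q$ a single point in both relative interiors. Theorem \ref{ThmWildPolyCharac} says $P\in\PS_{(n,n-2)}(Q)$ iff the set $\wh\cH$ of corresponding hyperplanes of the facets of $P$ satisfies its conditions (a)--(c); so the task is to rewrite (a)--(c) into conditions (a)--(b) of the Corollary in the polygon case.

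First I would analyse a single $\wh H\in\wh\cH$. By Theorem \ref{ThmWildPolyCharac}(b) either $Q\subseteq\wh H$ (the "or it contains $Q$" alternative), or $\wh H$ is a bounding hyperplane of $\Sig_{n-2}\oplus Q$, or some facet $F$ of $\Sig_{n-2}\oplus Q$ has $\ttint(F)\subset\wh H^-$. In the last two cases $\wh H$ meets $A_Q$ in a line $\ell$; since (as recorded after Definition \ref{DefQSeperable}) a polygon is combinatorially $Q$-separated only along a chord joining two of its vertices, $Q$-supportability of $\wh H$ (Theorem \ref{ThmWildPolyCharac}(a)) forces $\ell$ to meet $\cV(Q)$ in exactly two vertices $v_0,v_1$ — a polygon has no three collinear vertices, hence "exactly two". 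By Definition \ref{DefCorrespHyperplane} together with Theorem \ref{ThmWildPolyCharac}(c), $\wh H$ moreover contains the $n-2$ vertices $\cV(\Sig)\setminus\{w_0\}$ and has the remaining vertex $w_0$ in $\wh H^+$; the $n$ points $\{v_0,v_1\}\cup(\cV(\Sig)\setminus\{w_0\})$ already span $\wh H$, so it contains no further vertices of $Q$. This is precisely condition (a) of the Corollary, and the bounding-hyperplane subcase is exactly the one where the separation is trivial, i.e. (by the last sentence of Theorem \ref{ThmWildPolyCharac}) the meek situation.

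Next I would convert the rest of Theorem \ref{ThmWildPolyCharac}(a), simultaneous $Q$-supportability, into condition (b). For a polygon each individual $Q$-separating hyperplane is automatically $Q$-supportable, so the only real content is whether a whole family can be realized at once: deforming $\Sig_{n-2}\oplus Q$ into $P$ must push every $v\in\cV(Q)$ simultaneously onto each $\wh H\in\wh\cH$ that cuts $v$ off — i.e. each $\wh H$ with $v\in\wh H^-$ and $v\notin\{v_0,v_1\}$ — while leaving $\cF(\partial Q)$ intact, keeping $\Sig$ a simplex and $P$ $n$-dimensional. I would prove that for a fixed $v$ this is possible exactly when the number $t$ of such $\wh H$ is at most $n-2$. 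For the "$\le$" direction: each such $\wh H$ forces $v$ together with the $n-2$ vertices $\cV(\Sig)\setminus\{w_0^{(\wh H)}\}$ into a common facet of $P$, so $t$ of them force $v$ into the common face carrying $\cV(\Sig)\setminus\{w_0^{(\wh H_1)},\ldots,w_0^{(\wh H_t)}\}$; analysing the vertex figure of $v$ — which in $\Sig_{n-2}\oplus Q$ is combinatorially the $(n-1)$-polytope $\Sig_{n-2}\oplus\Sig_1$ — shows that pushing $v$ onto more than $n-2$ of the $\wh H$ over-determines the local cone at $v$, destroying a face of $Q$ or dropping the dimension. Conversely, for $t\le n-2$ one constructs the realization directly by nudging the cut-off arc vertices of each chord slightly out of $A_Q$ — in the spirit of the blow-up map $\psi$ of Lemma \ref{LemVertexProjection} — so that each lands on the prescribed intersection of hyperplanes simultaneously, yielding one common $H$-realization for the entire family. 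Since $Q$-containing hyperplanes never have $v$ in their negative halfspace, this per-vertex cap is exactly condition (b).

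Finally, the converse half of Theorem \ref{ThmWildPolyCharac} guarantees that intersecting $\Sig_{n-2}\oplus Q$ with any family $\cH$ whose corresponding data satisfy (a)--(c) produces a polytope in $\PS_{(n,n-2)}(Q)$; restating (a)--(c) via the previous two paragraphs identifies this family with the one in the Corollary, and the meek/wild dichotomy follows from the trivial/non-trivial separation split already used. The main obstacle is the middle step: making rigorous why the sharp bound is exactly $n-2$ — this requires the careful local analysis of which edges at $v$ survive a facet merge, how many merged facets can share $v$, and an explicit simultaneous perturbation showing $t\le n-2$ actually suffices.
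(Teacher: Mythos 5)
Your overall strategy is the paper's: specialize Theorem \ref{ThmWildPolyCharac} to $k=n-2$ and translate its conditions into (a) and (b) using that $Q$ is an $N$--gon. Your derivation of condition (a) (a chord of a polygon meets exactly two vertices, and those two together with $\cV(\Sig)\setminus\{w_0\}$ already span $\wh H$) matches the paper's argument. The problem is the step you yourself flag as the ``main obstacle'': you never actually prove that the per--vertex cap in condition (b) is $n-2$. You gesture at a vertex--figure analysis (``over-determines the local cone at $v$'') and at a simultaneous perturbation for the converse, but neither is carried out, and the mechanism you do state --- that $t$ cutting hyperplanes force $v$ into the common face carrying $\cV(\Sig)\setminus\{w_0^{(\wh H_1)},\ldots,w_0^{(\wh H_t)}\}$ --- does not by itself produce the number $n-2$. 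Since the bound $n-2$ is the entire quantitative content of the corollary, this is a genuine gap rather than a routine omission.

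The paper closes it with a short facet count that you are missing: in $\Sig_{n-2}\oplus Q$ every vertex $v\in\cV(Q)$ lies on exactly $2(n-1)$ facets (the two edges of the polygon at $v$, each joined with one of the $n-1$ facets of $\Sig_{n-2}$), and each hyperplane $H\in\cH$ with $v\in H^-$ merges two of these facets at $v$ into a single one, reducing the count by one. For $v$ to remain a vertex of the resulting $n$--polytope it must still lie on at least $n$ facets, so $2(n-1)-t\ge n$, i.e.\ $t\le n-2$; and conversely $t\le n-2$ together with the simpliciality of the polygon (only vertices and edges, so every individual chord--hyperplane is $Q$--supportable) gives simultaneous $Q$--supportability. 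Replacing your vertex--figure sketch with this count would complete your argument along the paper's lines.
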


\begin{proof}
We have to show that the two upper conditions fall together with the three conditions of Theorem \ref{ThmWildPolyCharac} for the case $\dim(Q) = 2$. First we show that the conditions of the corollary imply the three conditions of the theorem. Observe that every single hyperplane defined in (a) is $Q$--supportable since $Q$ is an $n$--gon and thus simplicial and only consists of vertices and facets. Hence every vertex in $H^-$ can be moved into $H$ without changing the face lattice of $\cF(\partial Q)$. Condition (b) ensures that $v$ is still contained in $n$ facets after intersecting with all hyperplanes. In $\Sig_{n-2} \oplus Q$ every vertex $v$ is contained in $2(n-1)$ facets: $v$ is contained in 2 edges of $Q$, every edge forms $n-1$ facets of the direct sum induced by the $n-1$ subsets of $\cV(\Sig_{n-2})$ of cardinality $n-2$. Every intersections transforms two of these hyperplanes into a single big one. Thus, $v$ may be contained in at most $n-2$ negative halfspaces since otherwise $v$ would lie only in $n-1$ hyperplanes at all and thus be no vertex of $P$ anymore. This is condition (a) of Theorem \ref{ThmWildPolyCharac}. Conditions (b) and (c) are trivial.\\

The inverse way is easy since (b) follows from $Q$--supportability (with the argument from above), $w_0 \in H^+$ is trivial and every hyperplane may only contain $2$ or all vertices of $Q$ since it is an $n$--gon. If $H$ contains only $2$ vertices of $\cV(Q)$ we obviously need $n-2$ more to define a hyperplane and hence $\lf(\cV(\Sig) \bs \{w_0\}\ri) \subset H$
\end{proof}

Although one does not get \textit{one} combinatorial description of all wild polytopes with special simplex we want to close the article with showing that in principle one can get very explicit results for certain classes of wild polytopes with special simplex. We show that by investigating the $n$--cubes $\square_n$ which always come with a special 1--simplex due to the fact that they are weakly Hannar (cf. Example \ref{ExaWeaklyHannar}).

Since we have seen that the wild polytopes with special simplex and 2--dimensional basis polytopes are rather easy, we want to start with discussing the 4--cube $\square_4$ as a non--trivial example since it is a wild polytope with special 1--simplex and 3--dimensional basis polytope. The Schlegel--diagram of the 4--cube looks the following way (cf. eg. \cite[p. 136]{Zieg1}):

\begin{figure}[H]
	\begin{center}
	\begin{picture}(150,150)(0,0)
		\put(0,0){\includegraphics[scale=0.3]{./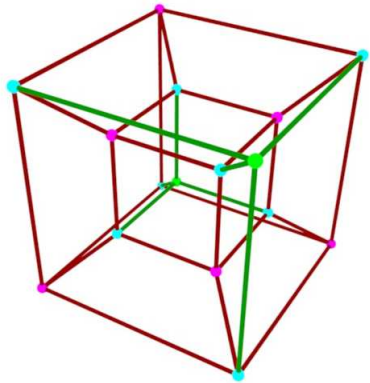}}
	\end{picture}
	\caption{The Schlegel--diagram of a 4--cube.}
	\label{Abb4CubeSchlegel}
	\end{center}
\end{figure}

The two green vertices form a special 1--simplex $\Sig$. Each turquoise vertex is connected to one vertex of $\Sig$; each pink vertex is not connected to a vertex of $\Sig$. Observe that there are seven other special 1--simplices due to the symmetry of the 4--cube. 

The complex $\cF(\square_4) \bs \cV(\Sig)$ consists of all faces of $\square_4$ not containing a green vertex. This is obviously a 2--dimensional polytopal complex since it isomorphic to the boundary complex of the 3--dimensional basis polytope. It consists of twelve 4--gons which are the orange planes in the following figure. We splitted it up in two pictures for clarity reasons. In the left picture we have three 4--gons in the outer 3--cube and three 4--gons connecting the outer with the inner cube. In the right picture we have three 4--gons in the inner cube and again three 4--gons connecting the outer with the inner cube.

\begin{figure}[H]
	\begin{center}
	\begin{picture}(300,150)(0,0)
		\put(0,0){\includegraphics[scale=0.3]{./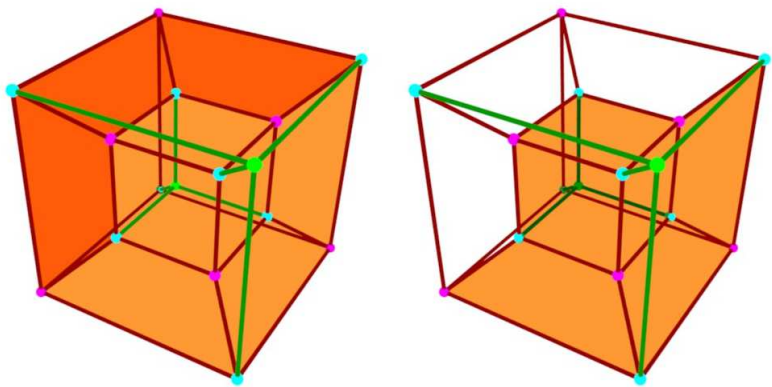}}
	\end{picture}
	\caption{The boundary complex of the basis polytope in the 4--cube.}
	\label{Abb4CubeBasisComplex}
	\end{center}
\end{figure}

If one realizes this polytopal complex as the boundary complex of a 3--polytope, the basis polytope of $\square_4$, it is easy to see that one gets a polytope which is combinatorially equivalent with the one on the left picture of figure \ref{Abb4CubeBasispoly} (cf. p. \pageref{Abb4CubeBasispoly}). We denote this polytope as $Q$. Thus, $\square_4$ is realized by intersecting $\Sig_1 \oplus Q$ with a couple of hyperplanes. Observe that $\Sig_1 \oplus Q$ is a bipyramid such that both of the pyramids have a Schlegel--diagram as in the middle picture of figure \ref{Abb4CubeBasispoly}. In fact, $\square_4$ is realized by intersecting each pyramid of the bipyramid $\Sig_2 \oplus Q$ with the four 3--spaces given in the right picture of figure \ref{Abb4CubeBasispoly} (respectively the other four 3--spaces defined by each 3--set of the other four turquoise vertices). To see that fill in all missing faces in the Schlegel--diagram of $\square_4$ to transform it into $\Sig_1 \oplus Q$ and observe that one has locally the situation on the left picture of the following figure which one has to transform in the right one to obtain one facet of $\square_4$:

\begin{figure}[H]
	\begin{center}
	\begin{picture}(300,100)(0,0)
		\put(0,0){\includegraphics[scale=0.4]{./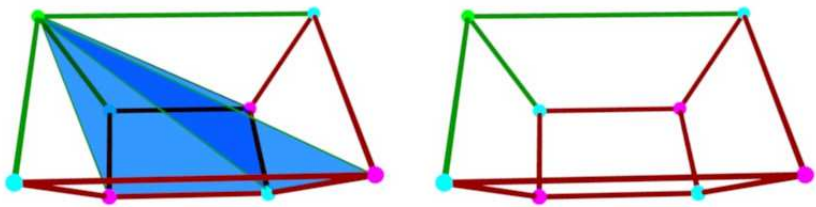}}
	\end{picture}
	\caption{Transformation of facets of a direct sum into a facet of the 4--cube.}
	\label{Abb4CubeFacet}
	\end{center}
\end{figure}

This is exactly what happens in the right picture of figure \ref{Abb4CubeBasispoly} and proves at the same time that the set of hyperplanes in the picture are simultaneously $Q$--separating as we have claimed it in Example \ref{ExaQSeperable}. 

$\square_4$ together with the $n$--cube in lower dimensions already provide the structure of the basis polytopes of $\square_n$:
\begin{prop}
The basis polytope $Q_n$ of $\square_n$ is the zonotope given by the  Minkowski sum of $\square_{(n-1)}$ and the vector $(1,\ldots,1) \in \fR^{n-1}$. Thus, it has the $f$--vector which is given by
\begin{eqnarray*}
	f_{(Q_n)}^{(0)}	& =	& f_{(\square_n)}^{(0)} + f_{(Q_{(n-1)}}^{(0)} \text{ and } \\
	f_{(Q_n)}^{(i)}	& =	& f_{(\square_n)}^{(i)} + f_{(Q_{(n-1)}}^{(i)} + f_{(Q_{(n-1)}}^{(i-1)}	\text{ for all } i \in \{1,\ldots,n-1\}
\end{eqnarray*}
\label{PropBasispolyCube}
\end{prop}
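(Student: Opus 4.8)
The plan is to prove the claim about the basis polytope $Q_n$ of $\square_n$ by induction on $n$, using the combinatorial description of $\cF(\square_n)\bs\cV(\Sig)$ together with the explicit realization of $\square_4$ discussed above as a model for the general step. First I would fix a convenient special $1$--simplex: take $\Sig = \conv\{\mathbf 0,\mathbf 1\}$ (the main diagonal of $\square_n = [0,1]^n$), which is special because every facet $\{x_i = 0\}$ omits only the vertex $\mathbf 1$ and every facet $\{x_i = 1\}$ omits only $\mathbf 0$. The linear subspace parallel to $\affspn(\Sig)$ is $V = \fR\cdot(1,\ldots,1)$, so by Definition \ref{DefBasispolytop} the basis polytope is $Q_n = \square_n / V$, the image of the cube under the projection $\pi:\fR^n\ra\fR^n/V\cong\fR^{n-1}$ along the all--ones direction.

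The key step is to identify this projection with the claimed Minkowski sum. I would choose coordinates so that $\pi$ is realized by $\pi(x_1,\ldots,x_n) = (x_1-x_n,\,x_2-x_n,\,\ldots,\,x_{n-1}-x_n)$. Writing a point of $\square_n$ as $x = (x',x_n)$ with $x'\in[0,1]^{n-1}$, one has $\pi(x) = x' - x_n\cdot(1,\ldots,1)$; as $x'$ ranges over $\square_{n-1}$ and $x_n$ over $[0,1]$ this set is exactly $\square_{n-1} + [-(1,\ldots,1),\mathbf 0] = \square_{n-1}\boxplus[\mathbf 0,(1,\ldots,1)]$ up to translation, i.e. the zonotope $Z(e_1,\ldots,e_{n-1},(1,\ldots,1))$. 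This already gives the polytope itself; the remaining work is the $f$--vector recursion. Here I would use the standard fact that for a zonotope $Z = Z' \boxplus [\mathbf 0, u]$ with $u$ in general position relative to $Z'$, the faces of $Z$ split into three classes: faces of $Z'$ not "moved" by the segment, their translates by $u$, and faces swept out ($F\boxplus[\mathbf 0,u]$ for faces $F$ of $Z'$ lying on the "silhouette" of $Z'$ in direction $u$). Since $Z' = Z(e_1,\ldots,e_{n-1}) = \square_{n-1}$ and, crucially, the silhouette of $\square_{n-1}$ in the all--ones direction is precisely (a realization of) $Q_{n-1}$ — this is exactly the content of the picture for $\square_4$, where the basis complex $\cF(\square_4)\bs\cV(\Sig)$ appears as the "equatorial" subcomplex of $\square_3$ — the three classes of $i$--faces are counted by $f^{(i)}_{\square_n}$ (wait: $f^{(i)}_{\square_{n-1}}$ appears twice plus a sweep term), and I would reconcile the bookkeeping with the use of $\cF(\square_n)\bs\cV(\Sig) = \cF(\partial Q_n)$ from Proposition \ref{SatzAequivSpezSimp}, which forces $f^{(i)}(Q_n)$ to equal the number of faces of $\square_n$ avoiding both $\mathbf 0$ and $\mathbf 1$; splitting those faces by whether they avoid the last coordinate direction or not yields the stated recursion $f^{(i)}_{Q_n} = f^{(i)}_{\square_n} + f^{(i)}_{Q_{n-1}} + f^{(i-1)}_{Q_{n-1}}$.

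The base case is $Q_2$: $\square_2 = [0,1]^2$ has special $1$--simplex equal to a diagonal, and $\cF(\square_2)\bs\cV(\Sig)$ consists of the two opposite vertices and... nothing else forming an edge, so $Q_2$ is a segment, matching $\square_1 \boxplus (1)$ which is a longer segment; the recursion's degenerate terms check out. The main obstacle I expect is making the "silhouette of $\square_{n-1}$ in the all--ones direction is $Q_{n-1}$" claim rigorous and functorial in $n$ — i.e. showing that the sweep contribution to $Z' \boxplus [\mathbf 0,u]$ is indexed by exactly the faces of the previously--constructed basis polytope rather than by some ad hoc subcomplex. I would handle this by matching both sides against the intrinsic description $f^{(i)}(Q_n) = \#\{i\text{-faces } F \text{ of } \square_n : \mathbf 0\notin F,\ \mathbf 1\notin F\}$: a face of the cube is a subcube determined by fixing a subset $S$ of coordinates to $0/1$ values; it avoids $\mathbf 0$ iff some fixed coordinate is $1$ and avoids $\mathbf 1$ iff some fixed coordinate is $0$. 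Partitioning such faces according to whether coordinate $n$ is free, fixed to $0$, or fixed to $1$, and using the symmetry swapping the roles of $0$ and $1$, collapses the last two cases into a single count governed by $Q_{n-1}$ and the free case into $f^{(i)}_{\square_n}$ minus a correction that telescopes — a short direct combinatorial computation which, together with the geometric zonotope identification above, completes the proof.
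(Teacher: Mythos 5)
Your proposal is correct and, for the zonotope identification, takes a genuinely different route from the paper. The paper works in $[-1,1]^n$, chooses the ``skew'' special simplex $\{(-1,\ldots,-1,1),(1,\ldots,1,-1)\}$ so that $\square_n$ is a prism over $\square_{n-1}$, splits $\cF(\square_n)\bs\cV(\Sig_n)$ into the closed lower half of $\square_{n-1}$ (bottom copy), the closed upper half (top copy) and the prism faces over $\partial Q_{n-1}$ (sides), and then separately verifies that the boundary of $\square_{n-1}+(1,\ldots,1)$ decomposes into exactly the same three pieces, concluding that the face lattices coincide. You instead compute the basis polytope straight from Definition \ref{DefBasispolytop}: since $\square_n=\square_{n-1}\times\{0\}+\{0\}\times[0,1]$ and $\pi$ is linear with kernel $\fR\cdot(1,\ldots,1)$, you get $\pi(\square_n)=\square_{n-1}+[\mathbf 0,-(1,\ldots,1)]$, a translate of the claimed zonotope --- shorter, and it yields an actual geometric identification rather than only a combinatorial isomorphism of face lattices. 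For the $f$--vector your trichotomy (last coordinate free / fixed to $0$ / fixed to $1$, among faces of the cube avoiding both endpoints of the diagonal) is the same decomposition as the paper's sides/bottom/top, and the bookkeeping does close: writing $g^{(i)}_{n}$ for the number of $i$--faces of $\square_{n}$ with at least one coordinate fixed to $0$ and at least one fixed to $1$, the free case contributes $g^{(i-1)}_{n-1}=f^{(i-1)}_{Q_{n-1}}$ while the two fixed cases contribute $2\bigl(f^{(i)}_{\square_{n-1}}-\binom{n-1}{i}\bigr)=f^{(i)}_{\square_{n-1}}+f^{(i)}_{Q_{n-1}}$; note that this assignment is the reverse of the one in your sketch, and that it produces $f^{(i)}_{\square_{n-1}}$, not $f^{(i)}_{\square_{n}}$, in the recursion (check $n=3$: the hexagon has $6=4+2$ vertices and $6=4+0+2$ edges), which is also what the paper's own decomposition yields --- the index in the printed statement appears to be off by one, and your parenthetical ``wait'' correctly flags this. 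The ``silhouette of $\square_{n-1}$'' detour via general zonotope face theory is dispensable once you commit to the direct count, which is the cleaner way to finish.
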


\begin{proof}
Let w.l.o.g. all $\square_n$ be centrally symmetric with vertices in $\{-1,1\}^n$. For every $\square_n$ we call the set of all faces which contain the point $(-1,\ldots,-1) \in \fR^n$ the \emph{lower} and the set of those which contain the point $(1,\ldots,1)$ the \emph{upper half} of $\square_n$. Observe that the intersection of the closure of the lower and the upper half is the basis polytope $Q_n$ since $\square_n$ is weakly Hannar and hence $(-1,\ldots,-1),(1,\ldots,1)$ form a special 1--simplex in $\square_n$.

Let w.l.o.g. $(-1,\ldots,-1,1),(1,\ldots,1,-1)$ be the vertices of the special simplex $\Sig_n$ we want to look at in $\square_n$. We investigate $\cF(\square_n) \bs \cV(\Sig_n)$: $\square_n$ is the prism over $\square_n$. Hence on the bottom $\cF(\square_n) \bs \cV(\Sig_n)$ consists of the closure of the lower half of $\square_{n-1}$, because $(1,\ldots,1,-1)$ is also one vertex of a special simplex $\Sig_{n-1}$ in $\square_{n-1}$. The same argument holds for the top with the upper half of $\square_{n-1}$. Between top and bottom $\cF(\square_n) \bs \cV(\Sig_n)$ contains all the faces $F \times (0,\ldots,0,2)$ where $F$ denotes the faces which are in the closure of the lower half of $\square_{n-1}$ on the bottom and $F$ shifted by $(0,\ldots,2)$ is contained in the closure of the upper half of $\square_{n-1}$ shifted by $(0,\ldots,2)$ in the top. These are exactly the faces belonging to $Q_{n-1}$.

This already gives the $f$--vector. Now we investigate the zonotope $\square_{n-1} + (1,\ldots,1)$: The Minkowski sum leaves the lower half of $\square_{n-1}$ invariant, turns every face $F$ of the intersection of the closure of the lower and the upper half (which is $Q_{n-1}$) into $F \times (1,\ldots,1)$ and shifts the upper half of $\square_{n-1}$ along $(1,\ldots,1)$ such that every face $F$ of the upper half, belonging to $Q_{n-1}$, is transfered to $F$ shifted by $(1,\ldots,1)$. Hence the face lattices of $\square_{n-1} + (1,\ldots,1)$ and $Q_n$ coincide.
\end{proof}

\vspace*{1cm}

\begin{scriptsize}
\noindent Timo de Wolff \\
Institute of Mathematics, Goethe--University Frankfurt am Main, Germany \\
email: \texttt{wolff(at)math.uni-frankfurt.de}\\
\url{http://www.uni-frankfurt.de/fb/fb12/mathematik/dm/personen/dewolff/}	
\end{scriptsize}

\vspace*{1cm}

\listoffigures
\begin{scriptsize} 
All figures were constructed with the open source 3D creator \emph{Blender}. See: \url{www.blender.org}
\end{scriptsize}

\vspace*{1cm}

\bibliographystyle{alpha}
\bibliography{./../References}
\end{document}